\def\cD{\mathcal{D}} 
\def\cF{\mathcal{F}}
\def\cL{\mathcal{L}}
\def\cN{\mathcal{N}}
\def\cS{\mathcal{S}}
\def\cW{\mathcal{W}}
\def\fA{\mathfrak{A}}
\def\Erw{\mathbb{E}}
\def\N{\mathbb{N}}
\def\Prob{\mathbb{P}} 
\def\R{\mathbb{R}}
\def\Var{\mathbb{V}{\rm ar\,}}
\def\eps{\varepsilon}
\def\vth{\vartheta}
\def\1{\vec{1}}
\def\3{{\ss}}
\def\eqdist{\stackrel{d}{=}}
\def\iprob{\stackrel{\Prob}{\to}}
\def\idist{\stackrel{d}{\to}}
\def\wh{\widehat}
\def\sign{\textsl{sign}}
\def\AsA{\textsl{(Add)}}
\def\AsM{\textsl{(Mult)}}
\begin{document}

\title*{Renewal approximation for the absorption time of a decreasing Markov chain}
\titlerunning{Absorption time of a decreasing Markov chain}
\author{Gerold Alsmeyer$^{1}$ and Alexander Marynych$^{1,2}$}
\institute{$^{1}$ Inst.~Math.~Statistics, Department
of Mathematics and Computer Science, University of M\"unster,
Einsteinstrasse 62, D-48149 M\"unster, Germany.\at
$^{2}$ Faculty of Cybernetics, Taras Shevchenko National University of Kyiv, 01601 Kyiv, Ukraine\at
\email{gerolda@math.uni-muenster.de, marynych@unicyb.kiev.ua}}

\maketitle

\abstract{We consider a Markov chain $(M_{n})_{n\ge 0}$ on the set $\N_{0}$ of nonnegative integers which is eventually decreasing, i.e. $\Prob\{M_{n+1}<M_{n}|M_{n}\ge a\}=1$ for some $a\in\N$ and all $n\ge 0$. We are interested in the asymptotic behaviour of the law of the stopping time $T=T(a):=\inf\{k\in\N_{0}: M_{k}<a\}$ under $\Prob_{n}:=\Prob(\cdot|M_{0}=n)$ as $n\to\infty$. Assuming that the decrements of $(M_{n})_{n\ge 0}$ given $M_{0}=n$ possess a kind of stationarity for large $n$, we derive sufficient conditions for the convergence in minimal $L^{p}$-distance of $\Prob_{n}((T-a_{n})/b_{n}\in\cdot)$ to some non-degenerate, proper law and give an explicit form of the constants $a_{n}$ and $b_{n}$.}

\bigskip

{\noindent \textbf{AMS 2000 subject classifications:} primary 60F05; secondary 60J10
}

{\noindent \textbf{Keywords:} Markov chain, absorption time, minimal $L^{p}$-distance, random recursion}, renewal theory.

\section{Introduction}\label{sec:intro}

The purpose of this paper is to study the asymptotic behavior of a class of integer-valued stochastic sequences $(T_{n})_{n\ge 0}$ that satisfy a random recursive equation of the form \eqref{main_rec} stated below. In many, though not all applications, such $T_{n}$ arise as absorption times for certain decreasing Markov chains.

To be more specific, let $(M_{n})_{n\ge 0}$ be a temporally homogeneous Markov chain on $\N_{0}$ with absorbing state 0 and transition matrix $P=(p_{i,j})_{i,j\ge 0}$ such that $p_{i,j}=0$ for $1\le i<j$. The last condition means that the chain is decreasing in the sense that
$$ \Prob(M_{n+1}<M_{n}|M_{n}\ge 1)\ =\ 1 $$
for all $n\ge 0$. Given this property, the time until absorption at 0, viz.
$$ T\ :=\ \inf\{k\ge 0:M_{k}=0\}, $$
is clearly an a.s. finite stopping time under each $\Prob_{n}:=\Prob(\cdot|M_{0}=n)$. Our purpose is to study the distribution of $T$ under $\Prob_{n}$ as $n\to\infty$. Obviously, for this goal the situation stated in the abstract, which is more general and also more commonly encountered in applications, can always be cast into the present framework by relabeling the states $i\ge a$ and collapsing all states $<a$ into one absorbing state.

Our analysis embarks on the simple observation that
\begin{equation}\label{eq:embarking recursion}
\Prob_{n}(T\in\cdot)\ =\ \sum_{k=1}^{n}\Prob_{n}(M_{1}=n-k,T\in\cdot)\ =\ \sum_{k=1}^{n}p_{n,n-k}\,\Prob_{k}(T\in\cdot)
\end{equation}
for all $n\in\N$. Introducing random variables $T_{0},\wh{T}_{0},T_{1},\wh{T}_{1},...$ and $I_{1},I_{2},...$ on a common probability space $(\Omega,\fA,\Prob)$ such that
\begin{itemize}\itemsep2pt
\item $\Prob(T_{n}\in\cdot)=\Prob(\wh{T}_{n}\in\cdot)=\Prob_{n}(T\in\cdot)$ for each $n\in\N_{0}$,
\item $\Prob(I_{n}\in\cdot)=\Prob_{n}(n-M_{1}\in\cdot)=(p_{n,n-k})_{1\le k\le n}$ for each $n\in\N$,
\item $(T_{n})_{n\ge 0}$, $(\wh{T}_{n})_{n\ge 0}$ and $(I_{n})_{n\ge 1}$ are independent,
\end{itemize}
we may restate \eqref{eq:embarking recursion} in the form of a random recursive equation, namely
\begin{equation}\label{main_rec}
T_{n}\ \eqdist\ 1+\wh{T}_{n-I_{n}}\quad\text{for all }n\ge 1,
\end{equation}
where $T_{0}:=0$ and $\eqdist$ means equality in distribution.

\subsection{Bibliographic notes}\label{subsec:bibliographic}

The random variable $T$ arises in different areas of applied probability and also in varying disguises, for instance, the number of blocks in regenerative compositions \cite{GneIksMar:10,GnedinPitman:05,Gnedin:04}, the number of collisions in exchangeable coalescents \cite{GneIksMar:14,HaasMiermont:11,Pitman:99,Sagitov:99}, the number of positive jumps in a random walk with a barrier \cite{IksMohl:08,MarVerovkin:14}, or the number of cuts needed to isolate the root of a random recursive tree \cite{IksMohl:07}.

Besides the asymptotic analysis of $Q_{n}:=\Prob_{n}(T\in\cdot)$ for particular Markov chains like those appearing in the afore-mentioned (as well as many other) models, there is also work on the behavior of $Q_{n}$ in a general context. Under the assumption that the large jumps of the chain are rare and occur at rates that behave like a negative power of the current state, the scaling limit of $(M_{n})_{n\ge 0}$ was derived in \cite{HaasMiermont:11}. As a byproduct of the main result, the authors also obtained the convergence in distribution of $T_{n}$ properly normalized to the law of $\int_{0}^{\infty} e^{-U_{t}}\,dt$, where $(U_{t})_{t\ge 0}$ is an increasing L\'evy process (subordinator). These results where extended in the recent preprint \cite{BertoinKortch:14} to the case of arbitrary Markov chains (not necessarily decreasing) with negative drift. On the other hand, there are situations where the distributions of the jumps are almost identical for all states far enough from $0$. In such cases it is natural to expect that, given $M_{0}=n$ for large $n$, the trajectory of $(M_{k})_{k\ge 0}$ stays close to the trajectory of $(n-S_{k})_{k\ge 0}$ for a suitable zero-delayed random walk $(S_{k})_{k\ge 0}$ with positive increments, implying that $Q_{n}$ is close to the law of the first passage time $\inf\{k\ge 0:n-S_{k}<a\}$ and therefore to some stable law after normalization. Using such a ``renewal approximation'', the simplest case of a random walk with increments having finite variance, was treated in \cite{VanCutsemYcart:94}, where some sufficient conditions were derived for the convergence of $T_{n}$, properly centered and normalized, to the standard normal law. Finally, we mention a short note \cite{Ross:82}, where a representation of $T_{n}$ as a sum of independent indicators was provided under the assumption that the increment distribution can be decomposed as the product of a function of the current state and a function of the jump size. 


\subsection{Renewal approximation}\label{subsec:renewal approx}

The main purpose of this paper is to provide some general results concerning the distributional convergence of $T_{n}$ in the context of the ``renewal approximation'' mentioned above and to extend this approach to another class of decreasing Markov chains with so-called ``multiplicative'' stationary decrements. More precisely, we will assume that one of the following conditions holds true as $n\to\infty$:
\begin{description}[(Mult)]\itemsep3pt
\item[\AsA] $I_{n}\idist\xi$ for a random variable $\xi$.
\item[\AsM] $n^{-1}I_{n}\idist 1-\eta$ for a $[0,1]$-valued random variable $\eta$.
\end{description}
Here the labels are chosen to serve as mnemonic acronyms for ``additive'' and ``multiplicative''.
Let us stipulate that $\xi$ and $\eta$ are always assumed to be independent of any other occurring random variables.

\vspace{.1cm}
We continue with an outline of the main idea behind our approach. Assume first that condition $\AsA$ holds. Let $(\xi_{n})_{n\ge 1}$ be a sequence of independent copies of $\xi$ and $(S_{n})_{n\ge 0}$ the associated zero-delayed random walk, viz.
\begin{equation}\label{add_rw_def}
S_{0}:=0\quad\text{and}\quad S_{n}:=S_{n-1}+\xi_{n}\quad\text{for }n\ge 1.
\end{equation}
Consider the renewal counting process
\begin{equation}\label{add_rp_def}
N_{n}\ :=\ \sum_{k\ge 0}\1_{\{S_{k}<n\}}\ =\ \inf\{k\ge 0:S_{k}\ge n\},\quad n\in\N_{0},
\end{equation}
thus $N_{0}:=0$. Standard renewal arguments lead to the distributional identity 
\begin{equation}\label{st_renewal_rec}
N_{n}\ \eqdist\ 1+\wh{N}_{n-\xi\wedge n}\quad\text{for }n\ge 1,
\end{equation}
where $(\wh{N}_{n})_{n\ge 0}$ denotes a copy of $(N_{n})_{n\ge 0}$ independent of $\xi$.
Comparing \eqref{main_rec} with \eqref{st_renewal_rec} under the hypothesis $\AsA$, one may expect that the distribution of $T_{n}$ can be approximated for large $n$ by the distribution of $N_{n}$ at least if some additional assumptions on the ``closeness in distribution'' of $I_{n}$ and $\xi\wedge n$ are imposed. In what follows we call the case, when assumption $\AsA$ is in force, the ''additive case``, highlighting that the approximating process $(N_{n})_{n\ge 0}$ in constructed from the standard additive random walk.

\vspace{.1cm}
A similar construction can be given when $\AsM$ holds. Let $(\eta_{n})_{n\ge 1}$ be a sequence of independent copies of $\eta$, $(\Pi_{n})_{n\ge 0}$ the associated \emph{multiplicative random walk}, viz.
\begin{equation}\label{mult_rw_def}
\Pi_{0}:=1\quad\text{and}\quad\Pi_{n}:=\Pi_{n-1}\eta_{n}\quad\text{for }n\ge 1.
\end{equation}
and $(\Lambda_{t})_{t\ge 0}$ the renewal counting process corresponding to $(-\log\Pi_{n})_{n\ge 0}$, thus
\begin{equation}\label{mult_rp_def}
\Lambda_{t}\ :=\ \sum_{k\ge 0}\1_{\{-\log\Pi_{k}\le t\}}\ =\ \inf\{k\ge 0:-\log\Pi_{k}>t\},\quad t\in\R.
\end{equation}
Upon setting $L_{t}:=\Lambda_{\,\log t}$ for $t>0$ and with $(\wh{L}_{t})_{t\ge 0}$ having the obvious meaning, we obtain the distributional identity
\begin{equation}\label{mult_renewal_rec}
L_{t}\ \eqdist\ 1+\wh{L}_{t\eta}\ =\ 1+\wh{L}_{t-t(1-\eta)}\quad\text{for }t\ge 1
\end{equation}
with $L_{t}:=0$ for $0<t<1$, which again looks similar to \eqref{main_rec} for $t=n$, because $I_{n}\approx n(1-\eta)$ for large $n$ if $\AsM$ holds. Since the approximating process $(L_{t})_{t>0}$ emerges here from a
multiplicative random walk we call this situation the ''multiplicative case``.

\subsection{Motivating examples}\label{subsec:examples}

We proceed with a series of examples from applied probability, where ``additive'' or ``multiplicative'' renewal approximations come into play.

\subsubsection*{Coalescents with multiple collisions} 

A coalescent with multiple collisions (or $\Lambda$-coalescent) is a continuous-time Markov  process $(\Sigma^{(n)}_{t})_{t\ge 0}$ on the space of partitions of $\{1,2,\ldots,n\}$ with one type of transitions: if at some time $t\ge 0$ there are $m$ blocks in $\Sigma^{(n)}_{t}$, then each $k$-tuple of them merges into one block at rate
\begin{equation}\label{lambda-rates}
\lambda_{m,k}\ =\ \int_{0}^1 x^{k-2} (1-x)^{m-k}\ \Lambda(dx),\quad 
2\le k\le m,
\end{equation}
where $\Lambda$ is a finite measure on $[0,1]$. Let $(\wh\Sigma^{(n)}_k)_{k\ge 0}$ denote the embedded discrete-time Markov chain at jump epochs and $|\wh\Sigma^{(n)}_k|$ the number of blocks in $\wh\Sigma^{(n)}_k$. Since the transition rates depend on a given state only through its size (number of blocks), it is clear that $(|\wh\Sigma^{(n)}_k|)_{k\ge 0}$ forms a Markov process as well. Moreover, it is decreasing with absorbing state $a=1$, and the total number of collisions, say $C_{n}$, equals the number of transitions of the chain $(|\wh\Sigma^{(n)}_k|)_{k\ge 0}$ before absorption, hence
$$ C_{1}\ =\ 0\quad\text{and}\quad C_{n}\ \eqdist\ 1+ \wh{C}_{n-I_{n}+1}\quad\text{for }n\ge 2, $$
where
\begin{itemize}\itemsep2pt
\item $\wh{C}_{n}$ is a copy of $C_{n}$ for $n\ge 1$; 
\item $I_{n}\in\{2,...,n\}$ denotes a copy of $n-|\wh\Sigma^{(n)}_{1}|+1$ and thus describes the number of blocks involved in the first collision event for $n\ge 2$;
\item the $\wh{C}_{n}$ and $I_{n}$ are independent for $n\ge 2$.
\end{itemize}
For the case when $\Lambda$ is a $\beta(a,b)$-distribution with parameter $(a,b)\in (0,1]\times (0,\infty)$, it is known (see, for instance, formulae (9) and (10) in \cite{GneIksMarMoh:14} or formula (11) in \cite{GneYakub:07}) that
$$ I_{n}-1\ \idist\ I_{\infty}\quad\text{as }n\to\infty, $$
where
$$ \Prob(I_{\infty}=k)\ =\ \frac{(2-a)\Gamma(k+a-1)}{\Gamma(a)(k+1)!}\quad\text{for }k\in\N, $$
and $C_{n}$ exhibits the same weak asymptotics as the renewal process $N_{n}:=\inf\{k\ge 0: S_{k}^{\infty}\ge n\}$, where $(S^{\infty}_{k})_{k\ge 0}$ denotes a random walk with generic increment $I_{\infty}$.\footnote{The stated weak convergence for $I_{n}-1$ holds also for $a\in(1,2)$ but the renewal approximation is no more valid in this range.}

\vspace{.1cm}
We refer to the seminal papers \cite{Pitman:99,Sagitov:99} for the construction and basic properties of $\Lambda$-coalescents, and to the lecture notes \cite{NBerestycki:09} for a survey and further references. More information on the limiting behaviour of $C_{n}$ as well as other functionals of the $\Lambda$-coalescent can be found in \cite{GneIksMar:14}.

\subsubsection*{The Bernoulli sieve}
In a classical occupancy scheme balls are allocated independently in an infinite array of boxes with probability $p_{k}$ of hitting box $k = 1,2,\ldots$ for a fixed probability sequence $(p_{k})_{k\ge 1}$ with positive entries, also called frequencies. The Bernoulli sieve (see \cite{GneIksMar:10,Gnedin:04,GneIksNegRos:09}) forms an extension of the occupancy scheme with random frequencies
$$p_{k}\ :=\ W_{1}W_{2}\cdots W_{k-1}(1-W_{k}),\quad  k\in\N, $$
where the $W_{k}$ are independent copies of a random variable $W\in (0,1)$. If $K_{n}$ denotes the number of occupied boxes after $n$ placements, then (see equation (6) in \cite{Gnedin:04})
$$ K_{0}\ =\ 0\quad\text{and}\quad K_{n}\ \eqdist\ 1+ \wh{K}_{n-J_{n}}\quad\text{for }n\ge 1 $$
where $K_{n},\wh{K}_{n},J_{n}$ are independent, $\wh{K}_{n}\eqdist K_{n}$ and the law of $J_{n}$ equals the conditional law of the number of balls in the first box given that this number is positive, viz.
$$\Prob(J_{n}=k)\ =\ \frac{1}{1-\Erw W^{n}}\binom{n}{k}\Erw (1-W)^{k}W^{n-k}\quad\text{for } k=1,2,\ldots,n. $$ 
By the law of large numbers,
$$ n^{-1}J_{n}\ \idist\ 1-W\quad\text{as }n\to\infty, $$
and, under additional assumptions on the rate of this convergence, $K_{n}$ has the same weak asymptotics as
$L_{n}:=\inf\{k\ge 0: W_{1} W_{2}\cdots W_{k}<1/n\}$. This has been proved in \cite{GneIksNegRos:09} if $\Erw|\log (1-W)|<\infty$, while the case of infinite mean was treated in \cite{GneIksMar:10} by using different approach.

\subsubsection*{Random walks with a barrier}

Let $\zeta_{1},\zeta_{2},\ldots$ be independent copies of a random variable $\zeta$ taking values in $\N$. The associated random walk $(R_{k}^{(n)})_{k\ge 0}$ with barrier $n\in\N$ is then defined as follows:
$$R_{0}^{(n)}:=\ 0\quad\text{and}\quad R_{k}^{(n)}:=\ R_{k-1}^{(n)}+\zeta_{k}\1_{\{R_{k-1}^{(n)}+\zeta_{k}<n\}}\quad\text{for }k\ge 1. $$ 
Obviously, $(n-R_{k}^{(n)})_{k\ge 0}$ forms a nonincreasing Markov chain on $\N$ with absorbing state 1 which is eventually attained under the additional condition that $\Prob(\zeta=1)>0$. Defining the number of jumps of $(R_{k}^{(n)})_{k\ge 0}$, viz.
$$ Z_{n}\ :=\ \sum_{k\ge 1}\1_{\{R_{k-1}^{(n)}\ne R_{k}^{(n)}\}}\ =\ \sum_{k\ge 1}\1_{\{R_{k-1}^{(n)}+ \zeta_{k}<n\}}, $$
one can easily see that
$$ Z_{1}\ =\ 0,\quad\text{and}\quad Z_{n}\ \eqdist\ 1+\wh{Z}_{n-\vth_{n}}\quad\text{for }n\ge 2,
$$
where as usual $Z_{n},\wh{Z}_{n}$ and $\vth_{n}$ are independent, $\wh{Z_{n}}\eqdist Z_{n}$ and the law of $\vth_{n}$ equals the conditional law of $\zeta$ given $\zeta<n$, thus
$$ \Prob(\vth_{n}=k)\ =\ \Prob\{\zeta=k|\zeta<n)\quad\text{for }k=1,2,\ldots,n-1. $$
Using the obvious fact that
$$ \vth_{n}\ \idist\ \zeta\quad\text{as }n\to\infty $$
the corresponding renewal approximation has been obtained in \cite{IksMohl:08} under the assumption that $\zeta$ lies in the domain of attraction of a stable law of index $\alpha\in[1,2]$.

\vspace{.1cm}
The number of zero jumps of $(R_{k}^{(n)})_{k\ge 0}$ (those dismissed in the unrestricted walk because they would lead to value greater or equal to $n$), i.e.
$$ V_{n}\ := \sum_{k=1}^{T_{n}}\1_{\{R_{k-1}^{(n)}+ \zeta_{k}\ge n\}} $$
with $T_{n}:=\inf\{k\in\N_{0}:R_{k}^{(n)}=n-1\}$ being the absorption time, provides another functional of interest and a further example involving a multiplicative renewal approximation. In \cite{MarVerovkin:14}, it was shown that the sequence $V_{n}':=V_{n}+\1_{\{n>1\}}$ satisfies
$$ V_{1}'\ =\ 0,\quad\text{and}\quad V_{n}'\ \eqdist\ 1+\wh{V}_{Y_{n}}'\quad\text{for }n\ge 2, $$
where $Y_{n}$ denotes the undershoot at $n$ of a standard random walk with generic increment $\zeta$ and the usual independence assumptions are made. Further assuming that
$\Prob\{\zeta>n\}=cn^{-\alpha}+O(n^{-(\alpha+\eps)})$ for some $c>0$, $\alpha\in(0,1)$ and $\eps>0$ and using the classical observation due to Dynkin that
$$ n^{-1}Y_{n}\ \idist\ \eta_{\alpha}\quad\text{as }n\to\infty, $$
where $\eta_{\alpha}$ has a $\beta(1-\alpha,\alpha)$-distribution, it was proved in \cite{MarVerovkin:14} that a renewal approximation can be used with a multiplicative random walk having generic step size $\eta_{\alpha}$.

\subsubsection*{A simple decreasing Markov chain} 

Our last example shows that the convergence condition $\AsA$ alone does not suffice for the renewal approximation to work. In fact, the distributions of $X_{n}$ and $N_{n}$ may exhibit a completely different asymptotic behaviour as the following example from \cite{Marynych:10} demonstrates. Consider a decreasing Markov chain $(M_{k})_{k\ge 0}$ with absorbing state $0$, transition probabilities
$$ p_{i,0}\ =\ 1-p_{i,i-1}\ =\ \frac{1}{i},\quad i\in\N, $$
absorption time $T$ and random variables $T_{n},\wh{T}_{n},I_{n}$ as defined at the beginning of this section, thus satisfying \eqref{main_rec}. Then one can easily verify that 
$$ \Prob(I_{n}=n)\ =\ 1-\Prob(I_{n}=1)\ =\ \frac{1}{n} $$
for all $n\ge 1$ and particularly
$$ I_{n}\ \iprob\ \xi\ =\ 1\quad\text{as }\quad n\to\infty $$
Consequently, the corresponding renewal process with generic step size $\xi$ is degenerate.
On the other hand, it can be checked using generating functions that $T_{n}$ has a uniform law on $\{1,2,\ldots,n\}$ for all $n\in\N$ whence
$$ \frac{T_{n}}{n}\ \idist\ \textit{Unif}\,(0,1),\quad n\to\infty. $$

\subsection{Minimal $L^{p}$-distance}

As illustrated by the last example, assumption $\AsA$ alone does not suffice for our renewal approximation to work, and the same is true for $\AsM$. Indeed, some extra conditions on the rate of convergence of $I_{n}$ to $\xi$ in the additive case, and of $n^{-1}I_{n}$ to $1-\eta$ in the multiplicative case are necessary. 

For the approach of this paper, which has already been used in \cite{GneIksMarMoh:14} and \cite{MarVerovkin:14}, the rate of convergence in $\AsA$ and $\AsM$ will be measured in terms of the minimal $L^{p}$-distance, for the following reasons a natural choice: 
\begin{itemize}
\item the convergence in the chosen distance implies convergence in distribution.
\item the distance is invariant, in a certain sense, under affine transformations of the laws, thus making calculations with centered and/or normalized random variables easy.
\end{itemize}

Let us briefly recall the definition and basic properties of the minimal $L^{p}$-distance, defined on the set $\mathcal{D}^{p}$ of probability distributions on $\R$ with finite absolute $p$-th moment. 
A pair of random variables $(X,Y)$, defined on a common probability space, is called a $(F,G)$-coupling for $F,G\in\cD^{p}$, if $\cL(X)=F$ and $\cL(Y)=G$, where $\cL(X)$ denotes the law of $X$. We then write $(X,Y)\sim (F,G)$. The minimal $L^{p}$-distance between $F$ and $G$ is now defined by
\begin{equation}\label{min_l_{p}_def}
d_{p}(F,G)\ :=\ \inf_{(X,Y)\sim (F,G)}\left(\Erw |X-Y|^{p}\right)^{1\wedge 1/p}\ =\ \inf_{(X,Y)\sim (F,G)}\|X-Y\|_{p}.
\end{equation}
In what follows, we also write, in slight abuse of language, $d_{p}(X,Y)$ and $d_{p}(X,G)$ for $d_{p}(\cL(X),\cL(Y))$ and $d_{p}(\cL(X),G)$, respectively.

\vspace{.1cm}
The following properties of $d_{p}$ for $p\ge 1$, summarized for our convenience, are well known (see, for instance \cite{Alsmeyer:13,GivensShortt:84,JohnsonSamworth:05,Rachev:91,Zolotarev:97}).

\begin{proposition}\label{was_dis_{p}rop}
Let $p\ge 1$ and $X,Y$ be random variables with laws $F,G\in\cD^{p}$, respectively. Further, let $F^{\leftarrow}(x):=\inf\{y:F(y)\ge x\}$ denote the pseudo-inverse of $F$ and $U$ a $\textit{Unif}\,(0,1)$ random variable. The function $d_{p}(\cdot,\cdot)$ has the following properties:
\begin{description}[(P4)]\itemsep2pt
\item[(P1)] The infimum in Equation \eqref{min_l_{p}_def} is attained for the $(F,G)$-coupling $(F^{\leftarrow}(U),G^{\leftarrow}(U))$, thus
$$ d_{p}(X,Y)\ =\ \left(\int_{0}^{1}|F^{\leftarrow}(x)-G^{\leftarrow}(x)|^{p}\ dx\right)^{1/p}. $$
In particular,
$$ d_{1}(X,Y)\ =\ \int_{0}^{1}|F^{\leftarrow}(x)-G^{\leftarrow}(x)|\ dx\ =\ \int_{\R}|F(x)-G(x)|\ dx. $$
\item[(P2)] If $p=1$, \textbf{Kantorovich-Rubinstein representation}\footnote{A representation of this form does not exist for $p>1$, see \cite[Lemma 4.3.2]{Rachev:91}, but it holds for $p\in(0,1)$ with $|x-y|$ replaced by $|x-y|^{p}$ in the definition of the set $\mathcal{F}$.} holds, viz.
$$ d_{1}(X,Y)=\sup_{\cF}|\Erw f(X)-\Erw f(Y)|, $$
where $\cF$ denotes the class of all Lipschitz functions $f:\R\to\R$ with Lipschitz constant one, that is $|f(x)-f(y)|\le |x-y|$ for all $x,y\in\R$.
\item[(P3)] $d_{p}(X+Z,Y+Z)\le d_{p}(X,Y)$ for any further random variable $Z\in{\mathcal D}_{p}$ independent of $(X,Y)$.
\item[(P4)] $d_{p}(aX+b,aY+b)=|a|d_{p}(X,Y)$ for all $a,b\in\R$.
\item[(P5)] If $(X_{n})_{n\ge 1}$ denotes a sequence of random variables with laws in $\cD^{p}$, then $d_{p}(X_{n},X)\to 0$ holds iff $X_{n}\idist X$ and $\Erw|X_{n}|^{p}\to\Erw|X|^{p}$, as $n\to\infty$.
\end{description}
\end{proposition}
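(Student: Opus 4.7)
The plan is to establish (P1) first, since it furnishes an explicit optimal coupling that drives the remaining arguments. The lower bound $\Erw|X-Y|^{p}\geq\int_{0}^{1}|F^{\leftarrow}(x)-G^{\leftarrow}(x)|^{p}\,dx$ for any $(F,G)$-coupling $(X,Y)$ is the classical Hoeffding--Fr\'echet rearrangement inequality: on the class of joint laws with fixed marginals, the convex cost $(x,y)\mapsto|x-y|^{p}$ is minimized by the comonotone coupling. Attainability at the quantile pair is then immediate, since $F^{\leftarrow}(U)\sim F$ and $G^{\leftarrow}(U)\sim G$ whenever $U\sim\textit{Unif}(0,1)$. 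The alternative formula for $p=1$ follows from Fubini by writing $|F^{\leftarrow}(x)-G^{\leftarrow}(x)|=\int_{\R}|\1_{\{F^{\leftarrow}(x)\leq t\}}-\1_{\{G^{\leftarrow}(x)\leq t\}}|\,dt$ and using the identity $\Prob(F^{\leftarrow}(U)\leq t)=F(t)$.

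For (P2), the bound $\sup_{\cF}|\Erw f(X)-\Erw f(Y)|\leq d_{1}(X,Y)$ is immediate from the 1-Lipschitz property applied pointwise on any coupling. The matching lower bound is realised by the test function $f(t):=\int_{0}^{t}\sign(G(s)-F(s))\,ds$, which is 1-Lipschitz and, via integration by parts, satisfies $\Erw f(X)-\Erw f(Y)=\int_{\R}|F(x)-G(x)|\,dx=d_{1}(X,Y)$ by (P1). For (P3) and (P4) I would transport the optimal quantile coupling $(X^{*},Y^{*})$ from (P1): since $(X^{*}+Z,Y^{*}+Z)$ is a valid coupling of the convolved laws, $d_{p}(X+Z,Y+Z)\leq\|X^{*}+Z-Y^{*}-Z\|_{p}=\|X^{*}-Y^{*}\|_{p}=d_{p}(X,Y)$, and likewise $(aX^{*}+b,aY^{*}+b)$ delivers (P4) after extracting $|a|$ from the $L^{p}$-norm.

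For (P5), the forward direction is short: the optimal coupling from (P1) provides versions $X_{n}',X'$ on a common space with $\|X_{n}'-X'\|_{p}\to 0$, so Markov's inequality yields convergence in probability (hence in distribution), and the reverse $L^{p}$-triangle inequality forces $\|X_{n}\|_{p}\to\|X\|_{p}$. For the converse, I would invoke Skorokhod's representation to realise $\tilde X_{n}\to\tilde X$ almost surely with preserved laws, then argue that $\Erw|\tilde X_{n}|^{p}\to\Erw|\tilde X|^{p}$ together with the almost-sure convergence $|\tilde X_{n}|^{p}\to|\tilde X|^{p}$ forces $(|\tilde X_{n}|^{p})_{n\geq 1}$ to be uniformly integrable; via $|\tilde X_{n}-\tilde X|^{p}\leq 2^{p-1}(|\tilde X_{n}|^{p}+|\tilde X|^{p})$ this transfers to $(|\tilde X_{n}-\tilde X|^{p})_{n\geq 1}$, yielding the desired $L^{p}$-convergence and hence $d_{p}(X_{n},X)\to 0$. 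I expect (P5) to be the main technical obstacle, specifically the extraction of uniform integrability from convergence of $p$-th moments: the cleanest route is to note that $|\tilde X_{n}|^{p}\to|\tilde X|^{p}$ almost surely together with $\Erw|\tilde X_{n}|^{p}\to\Erw|\tilde X|^{p}<\infty$ implies $L^{1}$-convergence (by Scheff\'e's lemma), which is equivalent to uniform integrability and closes the argument.
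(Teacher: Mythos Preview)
Your argument is correct in all five parts. One small remark on (P4): as written, transporting the optimal coupling only yields the inequality $d_{p}(aX+b,aY+b)\le|a|\,d_{p}(X,Y)$; equality then follows either by applying the same bound to the inverse affine map (when $a\ne 0$), or by observing directly from (P1) that the affine image of the quantile coupling is again the quantile coupling for the transformed laws.

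As for the comparison: the paper does not give a proof of this proposition at all. It is stated as a collection of well-known facts, with references to \cite{Alsmeyer:13,GivensShortt:84,JohnsonSamworth:05,Rachev:91,Zolotarev:97} in lieu of an argument. Your write-up therefore goes well beyond what the paper supplies; it is a clean self-contained sketch of the standard proofs found in those sources.
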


The rest of the paper is organized as follows. Section \ref{main_res_sec} contains our main results, which are stated as Theorem \ref{main_thm1} and Theorem \ref{main_thm2}. The proofs can be found in Section \ref{proofs_sec} and some necessary auxiliary results, including the ones on the convergence of the renewal processes $(N_{n})_{n\ge 0}$ and $(\Lambda_{t})_{t\ge 0}$ in minimal $L^{p}$-distance, are collected in an appendix.

\section{Main results}\label{main_res_sec}

\subsection{Weak convergence in the additive case}

\begin{theorem}\label{main_thm1}
Suppose that $\AsA$ holds and the law of $\xi$ is $1$-arithmetic and nondegenerate with finite mean $\mu$.
\begin{description}[(c)]\itemsep2pt
\item[(a)] If $\sigma^{2}:=\Var\xi<\infty$ and
\begin{equation}\label{additive_finite_variance_condition}
d_{2}(I_{n},\xi\wedge n)\ =\ o(n^{-1/2})\quad\text{as }n\to\infty,
\end{equation}
then
\begin{equation*}
d_{2}\left(\frac{T_{n}-\mu^{-1}n}{\sigma\mu^{-3/2}n^{1/2}},\,\cN(0,1)\right)\ \stackrel{n\to\infty}{\longrightarrow}\ 0,
\end{equation*}
where $\cN(0,1)$ denotes the standard normal law.
\item[(b)] If $\sigma^{2}=\infty$,
\begin{equation*}
\ell(n)\ :=\ \Erw [\xi^{2}\1_{\{\xi\le n\}}]
\end{equation*}
is slowly varying at infinity and
\begin{equation}\label{additive_sv_variance_condition}
d_{1}(I_{n},\xi\wedge n)\ =\ o(n^{-1}c(n)),\quad\text{as }n\to\infty
\end{equation}
for a positive function $c(t)$ such that
\begin{equation}\label{rv1_normalization_additive_case}
\lim_{n\to\infty}\frac{n\ell(c(n))}{c(n)^{2}}\ =\ 1,
\end{equation}
then
\begin{equation*}
d_{1}\left(\frac{T_{n}-\mu^{-1}n}{\mu^{-3/2}c(n)},\,\cN(0,1)\right)\ \stackrel{n\to\infty}{\longrightarrow}\ 0.
\end{equation*}
\item[(c)] If $\ell(n):=n^{\alpha}\,\Prob(\xi\ge n)$ is slowly varying at infinity for some $\alpha\in (1,2)$ and Condition \eqref{additive_sv_variance_condition} holds for a positive function $c(t)$ satisfying
\begin{equation}\label{rv2_normalization_additive_case}
\lim_{n\to\infty}\frac{n\ell(c(n))}{c(n)^{\alpha}}\ =\ 1,
\end{equation}
then
\begin{equation*}
d_{1}\left(\frac{T_{n}-\mu^{-1}n}{\mu^{-(\alpha+1)/\alpha}c(n)},\,\cS_{\alpha}\right)\ \stackrel{n\to\infty}{\longrightarrow}\ 0,
\end{equation*}
where $\cS_{\alpha}$ denotes the $\alpha$-stable law with characteristic function
\begin{equation}\label{cf_stable_law}
t\ \mapsto\ \exp\big[-|t|^{\alpha}\Gamma(1-\alpha)(\cos(\pi\alpha/2)+{\rm i}\sin(\pi\alpha/2)\sign(t))\big].
\end{equation}
\end{description}
\end{theorem}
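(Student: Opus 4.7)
The plan is to compare $T_n$ with the renewal counting process $N_n$ of \eqref{add_rp_def} in minimal $L^p$-distance and then invoke an $L^p$-limit theorem for $N_n$, to be recorded in the appendix. Fix $a_n$, $b_n$ and the limit law $\cZ$ appropriate to the regime ($\cZ=\cN(0,1)$ in (a), (b) and $\cZ=\cS_\alpha$ in (c)); since $d_p$ is affine-invariant by (P4), the triangle inequality gives
\begin{equation*}
d_p\!\left(\frac{T_n-a_n}{b_n},\,\cZ\right)\ \le\ \frac{d_p(T_n,N_n)}{b_n}\ +\ d_p\!\left(\frac{N_n-a_n}{b_n},\,\cZ\right),
\end{equation*}
with $p=2$ in (a) and $p=1$ in (b), (c). Since the second term vanishes by the renewal $L^p$-limit theorem from the appendix, each of (a), (b), (c) reduces to showing $d_p(T_n,N_n)=o(b_n)$ as $n\to\infty$.

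Set $D_n:=d_p(T_n,N_n)$. Absorbing the common summand $1$ in \eqref{main_rec} and \eqref{st_renewal_rec} via (P3) and applying the triangle inequality yields
\begin{equation*}
D_n\ \le\ d_p(\wh T_{n-I_n},\,\wh N_{n-I_n})\ +\ d_p(\wh N_{n-I_n},\,\wh N_{n-\xi\wedge n}).
\end{equation*}
Taking, for each $k\le n$, an optimal $(\wh T_k,\wh N_k)$-coupling attaining $D_k$ (available by (P1)) and conditioning on the independent index $I_n$, the first summand is bounded by $(\Erw D_{n-I_n}^p)^{1/p}$. For the second summand, couple both copies of $\wh N$ through a single underlying renewal sequence; then $|\wh N_{n-I_n}-\wh N_{n-\xi\wedge n}|$ counts the renewals in a random interval of length $|I_n-\xi\wedge n|$ located near $n$. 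Combining the optimal $d_p$-coupling of $(I_n,\xi\wedge n)$ with the elementary renewal estimate $\Erw[\wh N_{m+h}-\wh N_m]\sim h/\mu$ and, in case (a), its second-moment analogue $\Var[\wh N_{m+h}-\wh N_m]=O(h)$, one arrives after some work at
\begin{equation*}
d_p(\wh N_{n-I_n},\,\wh N_{n-\xi\wedge n})\ \le\ \mu^{-1}\,d_p(I_n,\xi\wedge n)\,(1+o(1)),
\end{equation*}
with a parallel estimate in (c) derived from stable renewal asymptotics.

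Feeding these bounds into the recursion and iterating along the random index sequence $n\mapsto n-I_n\mapsto\cdots$ accumulates a total error essentially of the form $\sum_{k\le n}d_p(I_k,\xi\wedge k)$, respectively $\big(\sum_{k\le n}d_2(I_k,\xi\wedge k)^2\big)^{1/2}$ in (a). Under \eqref{additive_finite_variance_condition} the latter is $o(n^{1/2})=o(b_n)$ for (a); under \eqref{additive_sv_variance_condition} together with \eqref{rv1_normalization_additive_case}, respectively \eqref{rv2_normalization_additive_case}, the former is $o(c(n))=o(b_n)$ for (b), respectively (c). The main obstacle, and the reason why plain distributional convergence $I_n\idist\xi$ does not suffice (as the example in Section \ref{subsec:examples} shows), is the quantitative renewal-regularity estimate above: obtaining a sharp $d_p$-bound for $\wh N_\cdot$ under a random perturbation of its argument is what forces the rate conditions \eqref{additive_finite_variance_condition}, \eqref{additive_sv_variance_condition} and pins down the correct $b_n$ in each regime.
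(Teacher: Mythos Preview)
Your global strategy coincides with the paper's: reduce to $e_n:=d_p(T_n,N_n)=o(b_n)$ via the triangle inequality and an $L^p$-limit theorem for $N_n$ (Proposition~\ref{renewal_{p}rocess_d_{p}_conv}), then derive a recursive bound for $e_n$ from \eqref{main_rec} and \eqref{st_renewal_rec}. But two of your steps are not actually carried out.

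For the term $e_n':=d_p(\wh N_{n-I_n},\wh N_{n-\xi\wedge n})$, the asymptotic renewal estimates you invoke ($\Erw[\wh N_{m+h}-\wh N_m]\sim h/\mu$, $\Var[\cdot]=O(h)$) hold as $h\to\infty$, whereas here $h=|I_n'-\xi'\wedge n|$ is random and typically small; the $(1+o(1))$ is not uniform and cannot be pulled outside the expectation, and no separate ``stable renewal asymptotics'' are needed for (c). The paper instead exploits the $1$-arithmeticity of $\xi$: since $\xi\ge 1$ a.s., the number of renewals in any integer interval of length $h$ is at most $h$ \emph{pointwise}, so under an optimal coupling $|N_{n-I_n'}-N_{n-\xi'\wedge n}|\le|I_n'-\xi'\wedge n|$ and hence $e_n'\le d_p(I_n,\xi\wedge n)$ in all three cases at once. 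Second, ``iterating along the random index sequence'' is only a heuristic; turning the recursion $e_n\le e_n'+\sum_{k<n}\Prob(I_n=n-k)\,e_k$ into the desired bound requires a quantitative lemma on such linear recursions, which the paper supplies as Lemma~\ref{L61gen} (its hypothesis (C1) reduces here to $\Erw I_n\to\mu>1$) together with Karamata-type estimates for $\sum_{k\le n}c(k)/k$. In particular, your claim that case~(a) produces an accumulated error $(\sum_{k\le n}d_2(I_k,\xi\wedge k)^2)^{1/2}$ is not justified: the mixture coupling gives only $d_2(\wh T_{n-I_n},\wh N_{n-I_n})\le(\sum_k\Prob(I_n=n-k)\,e_k^2)^{1/2}$, and squaring the resulting inequality introduces a cross term that obstructs a clean iteration.
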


\begin{remark}\label{rem1}
Regarding the existence and further properties of the function $c$ appearing in parts (b) and (c) of Theorem \ref{main_thm1} as well as Theorem \ref{main_thm2} we refer to Remark \ref{c_t_existence_and_properties} in the Appendix.
\end{remark}

\subsection{Weak convergence in the multiplicative case}

The case of multiplicative renewal approximation is treated by our second theorem.

\begin{theorem}\label{main_thm2}
Suppose that $\AsM$ holds and the distribution of $|\log\eta|$ is nonarithmetic with finite mean $\mu_{0}$.
\begin{description}[(c)]\itemsep2pt
\item[(a)] If $\sigma_{0}^{2}:=\Var(|\log \eta|)<\infty$ and 
\begin{equation}\label{multiplicative_finite_variance_condition}
d_{1}\left(\log\left(1-\frac{I_{n}}{n}\right),\log\eta\right)\ =\ o\left(\frac{1}{\log^{1/2}n}\right)\quad\text{as }n\to\infty,
\end{equation}
then
\begin{equation*}
d_{1}\left(\frac{T_{n}-\mu_{0}^{-1}\log n}{\sigma_{0}\mu_{0}^{-3/2}\log^{1/2}n},\,\cN(0,1)\right)\ \stackrel{n\to\infty}{\longrightarrow}\ 0.
\end{equation*}
\item[(b)] If $\sigma_{0}^{2}=\infty$,
\begin{equation}
\ell(t)\ :=\ \Erw[\log^{2}\eta\1_{\{|\log \eta|\le t\}}]
\end{equation}
is slowly varying at infinity and 
\begin{equation}\label{mult_sv_variance_condition}
d_{1}\left(\log\Big(1-\frac{I_{n}}{n}\Big),\log \eta\right)\ =\ o\left(\frac{c(\log n)}{\log n}\right)\quad\text{as }n\to\infty
\end{equation}
for a positive function $c(t)$ satisfying
\begin{equation}\label{rv1_{n}ormalization_mult_case}
\lim_{t\to\infty}\frac{t\ell(c(t))}{c(t)^{2}}\ =\ 1,
\end{equation}
then
\begin{equation*}
d_{1}\left(\frac{T_{n}-\mu_{0}^{-1}\log n}{\mu_{0}^{-3/2}c(\log n)},\,\cN(0,1)\right)\ \stackrel{n\to\infty}{\longrightarrow}\ 0.
\end{equation*}
\item[(c)] If $\ell(t):=t^{\alpha}\,\Prob(|\log \eta| > t)$ is slowly varying at infinity for some $\alpha\in(1,2)$ and Condition \eqref{mult_sv_variance_condition} holds for some positive function $c(t)$ satisfying
\begin{equation}\label{rv2_{n}ormalization_mult_case}
\lim_{t\to\infty}\frac{t\ell(c(t))}{c(t)^{\alpha}}\ =\ 1,
\end{equation}
then
\begin{equation*}
d_{1}\left(\frac{T_{n}-\mu_{0}^{-1}\log n}{\mu_{0}^{-(\alpha+1)/\alpha}c(\log n)},\,\cS_{\alpha}\right)\ \stackrel{n\to\infty}{\longrightarrow}\ 0,
\end{equation*}
where $\mathcal{S}_{\alpha}$ is the $\alpha$-stable law with characteristic function in \eqref{cf_stable_law}.
\end{description}
\end{theorem}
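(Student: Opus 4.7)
The proof follows the same scheme as Theorem \ref{main_thm1}, but is carried out on the logarithmic scale so that the multiplicative walk $(\Pi_n)_{n\ge 0}$ is replaced by the additive walk $(-\log\Pi_n)_{n\ge 0}$ with generic step size $|\log\eta|$. Write $a_n$ and $b_n$ for the centering and scaling constants in the three cases, so that $a_n=\mu_0^{-1}\log n$ throughout, while $b_n$ equals $\sigma_0\mu_0^{-3/2}\log^{1/2}n$, $\mu_0^{-3/2}c(\log n)$, or $\mu_0^{-(\alpha+1)/\alpha}c(\log n)$, respectively. Setting $L_n:=\Lambda_{\log n}$ and letting $\cG$ denote the relevant limit law, the triangle inequality combined with property (P4) gives
\begin{equation*}
d_1\!\left(\frac{T_n-a_n}{b_n},\cG\right)\ \le\ \frac{d_1(T_n,L_n)}{b_n}\ +\ d_1\!\left(\frac{L_n-a_n}{b_n},\cG\right),
\end{equation*}
so that it suffices to prove (i) $d_1(T_n,L_n)=o(b_n)$ and (ii) $d_1((L_n-a_n)/b_n,\cG)\to 0$.

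Task (ii) is essentially classical. Substituting $t=\log n$ reduces it to a $d_1$-central limit (respectively $d_1$-stable limit) theorem for the renewal counting process $(\Lambda_t)_{t\ge 0}$ associated with a zero-delayed random walk whose increments are distributed as the nonarithmetic positive random variable $|\log\eta|$. The corresponding statements, covering the three moment regimes of parts (a), (b) and (c), are precisely those collected in the Appendix and are applied directly here.

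Task (i) is the main work. Combining recursion \eqref{main_rec} with its multiplicative counterpart \eqref{mult_renewal_rec} and using property (P3), I would first obtain
\begin{equation*}
d_1(T_n,L_n)\ \le\ d_1\!\bigl(\wh T_{n-I_n},\wh L_{n\eta}\bigr)\ \le\ \Erw\,d_1(T_{n-I_n},L_{n-I_n})\ +\ \Erw\,d_1\!\bigl(\wh L_{n-I_n},\wh L_{n\eta}\bigr),
\end{equation*}
where $I_n$ and $\eta$ are optimally $d_1$-coupled and a copy of $\wh L_{n-I_n}$ is inserted between. The second expectation is controlled by rewriting $\wh L_{n-I_n}=\wh\Lambda_{\log n+\log(1-I_n/n)}$ and $\wh L_{n\eta}=\wh\Lambda_{\log n+\log\eta}$ and invoking a Lipschitz-type bound of the form $d_1(\Lambda_s,\Lambda_t)\le C|t-s|$, which follows from the unit-jump monotone structure of $\Lambda$ together with the linear growth of the associated renewal function. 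Combined with hypothesis \eqref{mult_sv_variance_condition} (respectively \eqref{multiplicative_finite_variance_condition} in case (a)), this produces a one-step error of order $o(c(\log n)/\log n)$ (respectively $o(\log^{-1/2}n)$).

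The genuinely hard step is iterating this bound. Unrolling the recursion leads to a cumulative error comparable to $\Erw\sum_{k=0}^{T_n-1}\varepsilon(M_k)$ along the trajectory of the chain, where $\varepsilon(m)$ is the per-step rate coming from the hypothesis. Because under $\Prob_n$ the expected absorption time is of order $\mu_0^{-1}\log n$ (each step contracts the chain by a multiplicative factor $\eta$ with $\Erw|\log\eta|=\mu_0$), the total error is of order $\log n\cdot o(c(\log n)/\log n)=o(c(\log n))=o(b_n)$ in cases (b) and (c), and analogously $o(\log^{1/2}n)=o(b_n)$ in case (a). The principal technical obstacle will be obtaining a uniform-in-$m$ version of the rate bound $\varepsilon(m)$ across the states $m\le n$ that can realistically be visited, in full analogy with the argument for Theorem \ref{main_thm1}; no genuinely new ingredient appears, but care is required because both the error and the normalization live on the logarithmic scale.
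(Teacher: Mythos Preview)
Your overall structure is right, and task (ii) is indeed handled by the appendix exactly as you say. The gap is in your control of the one-step error $d_{1}(\wh L_{n-I_n},\wh L_{n\eta})$. The ``Lipschitz-type bound'' $d_{1}(\Lambda_s,\Lambda_t)\le C|t-s|$ does \emph{not} follow from the reasons you give. The ``unit-jump'' argument worked in Theorem~\ref{main_thm1} because there the increments $\xi$ are integer-valued with $\xi\ge 1$, so the number of renewals in an interval is bounded by the interval's length. In the multiplicative case the increments $|\log\eta|$ can be arbitrarily small, so no such pathwise bound is available. What you actually need is $\Erw[U(u\vee v)-U(u\wedge v)]$ to be controlled by $\Erw|u-v|$, where $U(t)=\Erw\Lambda_t$ is the zero-delayed renewal function; but $U$ is not Lipschitz in general (Blackwell's theorem gives only $U(t+h)-U(t)\to h/\mu_0$ for fixed $h$, with no uniformity in $h$, and $U$ may have jumps or unbounded density).

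This is precisely why the paper does \emph{not} compare $T_n$ directly with $L_n$, but inserts the \emph{stationary} process $L_n^{*}=\Lambda^{*}_{\log n}$. The point is that $\Erw\Lambda_t^{*}=t^{+}/\mu_0$ holds \emph{exactly}, so the Lipschitz bound $\Erw|\Lambda_u^{*}-\Lambda_v^{*}|=\mu_0^{-1}\Erw|u-v|$ is immediate. One then pays an extra term $d_1(L_n,L_n^{*})$, but this equals $\Erw\Lambda_{\log n}-\Erw\Lambda_{\log n}^{*}=U(\log n)-\mu_0^{-1}\log n$, which is $o(c(\log n))$ under the assumptions of each part. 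The iteration is then carried out via the deterministic recursion lemma (Lemma~\ref{L61gen}) with $\psi_n\equiv 1$, rather than by heuristically summing errors along the trajectory; condition (C1) of that lemma is verified from $\AsM$. Your sketch of the iteration is morally correct but would need this formalization.
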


\section{Proofs}\label{proofs_sec}

\subsection{Proof of Theorem \ref{main_thm1}}\label{sec_{p}roof_{t}hm1}

Recall that $(N_{n})_{n\ge 0}$ denotes the renewal counting process of a random walk $(S_{n})_{n\ge 0}$ with generic step size $\xi$. We start by pointing out that
\begin{equation}\label{mom_{i}ndex_conv}
\lim_{n\to\infty}\Erw I_{n}\ =\ \Erw \xi\ =\ \mu.
\end{equation}
Indeed with $p=2$ in (a) and $p=1$ in (b),(c), we have
$$ d_{p}(I_{n},\xi)\ \le\ d_{p}(I_{n},\xi\wedge n)\ +\ d_{p}(\xi\wedge n,\xi), $$
where the first summand tends to zero by \eqref{additive_finite_variance_condition} and \eqref{additive_sv_variance_condition}, respectively, while the second one does so by the dominated convergence theorem. Hence \eqref{mom_{i}ndex_conv} follows by (P5) in Proposition \ref{was_dis_{p}rop}, and it ensures that $\psi_{n}=n$ satisfies Condition (C1) of Lemma \ref{boundedness} in the Appendix with $p_{n,k}:=\Prob(I_{n}=n-k)$, a fact to be used further below (see before \eqref{asymptotics eps_n}).

In what follows, we let $c(t)$ be given by $\sigma t^{1/2}$ in part (a), by \eqref{rv1_normalization_additive_case} in part (b), and by \eqref{rv2_normalization_additive_case} in part (c). We also put $\alpha=2$ in (a) and (b), and we write $d_{1(2)}$ to denote $d_{p}$ with $p=2$ in (a) and $p=1$ in (b) and (c).
Finally, let $\cW$ be the standard normal law $\cN(0,1)$ in (a) and (b) and the stable law $\cS_{\alpha}$ with characteristic function \eqref{cf_stable_law} in (c). Our task is then to show that
$$ d_{1(2)}\left(\frac{T_{n}-\mu^{-1} n}{\mu^{-(\alpha+1)/\alpha} c(n)}, \cW\right) $$
converges to 0 as $n\to\infty$. Using the triangle inequality, we can bound this distance by
\begin{align*}
d_{1(2)}\left(\frac{T_{n}-\mu^{-1} n}{\mu^{-(\alpha+1)/\alpha} c(n)}, \frac{N_{n}-\mu^{-1} n}{\mu^{-(\alpha+1)/\alpha} c(n)}\right)\ +\ d_{1(2)}\left(\frac{N_{n}-\mu^{-1} n}{\mu^{-(\alpha+1)/\alpha} c(n)}, \cW\right),
\end{align*}
and Proposition \ref{renewal_{p}rocess_d_{p}_conv} (1-arithmetic case) ensures that the second term converges to zero. As for the first one, it is enough to prove that 
\begin{equation}\label{proof_add_case_est1}
e_{n}\ :=\ d_{1(2)}(T_{n},N_{n})\ =\ o(c(n))\quad\text{as }n\to\infty
\end{equation}
by property (P4) in Proposition \ref{was_dis_{p}rop}.

\vspace{.1cm}
Using the recursions \eqref{main_rec}, \eqref{st_renewal_rec} and again property (P4), we have for $n\ge 1$ 
\begin{align*}
e_{n}\ &=\ d_{1(2)}(\wh{T}_{n-I_{n}},\wh{N}_{n-\xi\wedge n})\\
&\le\ d_{1(2)}(\wh{N}_{n-I_{n}},\wh{N}_{n-\xi\wedge n})\ +\ d_{1(2)}(\wh{T}_{n-I_{n}},\wh{N}_{n-I_{n}})\\
&\le\ d_{1(2)}(\wh{N}_{n-I_{n}},\wh{N}_{n-\xi\wedge n})\ +\ \sum_{k=0}^{n-1}\Prob(I_{n}=n-k)\,d_{1(2)}(T_{k},N_{k})\\
&=\ e_{n}'\ +\ \sum_{k=0}^{n-1}\Prob(I_{n}=n-k)\,e_{k}\quad\text{with}\quad e_{n}':=d_{1(2)}(\wh{N}_{n-I_{n}},\wh{N}_{n-\xi\wedge n}).
\end{align*}

Assuming we have already proved
\begin{equation}\label{proof_add_case_est2}
e_{n}'=o(n^{-1}c(n))\quad\text{as }n\to\infty,
\end{equation}
we can apply Lemma \ref{L61gen} with $\psi_{n}=n$ and $r_{n}= \eps c(n)/n$ for arbitrarily small $\eps>0$ to infer that, as $n\to\infty$,
\begin{equation}\label{asymptotics eps_n}
e_{n}\ =\ \eps\,O\left(\sum_{k=1}^{n}\sup_{j\ge k}j^{-1}c(j)\right)\ =\ \eps\,O\left(\sum_{k=1}^{n}k^{-1}c(k)\right)\ =\ \eps\,O(c(n)),
\end{equation}
where we have utilized Theorem 1.5.3 and Proposition 1.5.8 in \cite{BingGolTeug:89} and the fact that $c(x)$ is a regularly varying function of index $1/\alpha$, see Remark \ref{c_t_existence_and_properties} in Appendix.

\vspace{.1cm}
Left with the proof of \eqref{proof_add_case_est2}, let $(I_{n}',\xi')$ be a $(\mathcal{L}(I_n),\mathcal{L}(\xi))$-coupling such that
$d_{1(2)}(I_{n},\xi\wedge n)=d_{1(2)}(I_{n}',\xi'\wedge n)=\|I_{n}'-\xi'\wedge n\|_{1(2)}$ and 
$(S_{n}')_{n\ge 0}$ be a copy of $(S_{n})_{n\ge 0}$ which is independent of $(I_{n}',\xi')$. Denote by $(N_{n}')_{n\ge 0}$ the corresponding renewal counting process, clearly a copy of $(N_{n})_{n\ge 0}$. Then
\begin{align*}
e_{n}'\ &=\ d_{1(2)}(\wh{N}_{n-I_{n}},\wh{N}_{n-\xi\wedge n})\\
&\le\ \|N_{n-I_{n}'}'-N_{n-\xi'\wedge n}'\|_{1(2)}\\
&=\ \left\|\sum_{k\ge 0}\1_{\{(n-I_{n}')\wedge (n-\xi'\wedge n)< S_{k}'\le (n-I_{n}')\vee (n-\xi'\wedge n)\}}\right\|_{1(2)}\\
&\le\ \|I_{n}'-\xi'\wedge n\|_{1(2)},
\end{align*}
where the last inequality follows from the fact that, in view of $\Prob(\xi_{1}\ge 1)=1$, the number of points $S_{k}'$ falling in some interval $(a,b]$, $a,b\in\N$, cannot exceed the length of this interval. Consequently,
$$ e_{n}'\ \le\ d_{1(2)} (I_{n}',\xi'\wedge n)\ =\ d_{1(2)}(I_{n},\xi\wedge n), $$
and thus \eqref{proof_add_case_est2} by \eqref{additive_finite_variance_condition} in part (a) and by \eqref{additive_sv_variance_condition} in parts (b) and (c). This completes the proof.

\subsection{Proof of Theorem \ref{main_thm2}}\label{sec_{p}roof_{t}hm2}

The proof of Theorem \ref{main_thm2} uses similar ideas as the previous one in the additive case, but for technical reasons it is more convenient to work with the stationary version of the renewal counting process $(\Lambda_{t})_{t\ge 0}$ associated with $(-\log\Pi_{n})_{n\ge 0}$ (see \eqref{mult_rw_def} and \eqref{mult_rp_def} for the definition of $\Pi_{n}$ and $\Lambda_{t}$).

Let $\eta_{0}^{*}\in(0,1)$ be a random variable independent of $(\Pi_{k})_{k\ge 0}$  such that
\begin{equation}\label{overshoot_density}
r(t)\ :=\ \Prob(|\log \eta_{0}^{*}|\le t)\ =\ \frac{1}{\mu_{0}}\int_{0}^{t}\Prob(|\log \eta|>s)\ ds,\quad t\ge 0.
\end{equation}
Define the delayed multiplicative random walk $(\Pi_{k}^{*})_{k\ge 0}$ by
$$ \Pi_{0}^{*}\ =\ \eta_{0}^{*}\quad\text{and}\quad \Pi_{k}^{*}:=\eta_{0}^{*}\eta_{1}\cdots\eta_{k}\quad\text{for }k\in\N, $$
the stationary renewal counting process associated with $(-\log\Pi^{*}_{k})_{k\ge 0}$ by
$$ \Lambda_{t}^{*}\ :=\ \sum_{k\ge 0}\1_{\{-\log \Pi_{k}^{*}\le t\}},\quad t\in\R, $$
and finally $L_{t}^{*}:=\Lambda_{\log t}^{*}$ for $t>0$. Then it is well-known that
\begin{equation}\label{st_ren_func}
\Erw\Lambda_{t}^{*}\ =\ \frac{t^{+}}{\mu_{0}}\quad\text{for all }t\in\R,
\end{equation}
a fact frequently be used hereafter. Moreover, as $\Lambda_{t}=\inf\{k:-\log\Pi_{k}>t\}$, Wald's identity ensures (see e.g. \cite[Theorem 2.5.1]{Gut:09})
\begin{equation}\label{gen_ren_func}
\Erw\Lambda_{t}\ =\ \frac{t}{\mu_{0}}\ +\ o(t)\quad\text{as }t\to\infty,
\end{equation}
and $o(t)$ may be replaced with $o(c(t))$ under the assumptions of part (b) and (c) of Theorem \ref{main_thm2} (see p.~5 in \cite{IksMarMei:12}). The counterpart of \eqref{mult_renewal_rec} for the process $(L_{t}^{*})_{t\ge 0}$ is given by
\begin{equation}\label{mult_renewal_st_rec}
L_{t}^{*}\ \eqdist\ \1_{\{\eta_{0}^{*}>1/t\}}+\wh{L}_{t\eta}^{*},\quad t\ge 0.
\end{equation}
where $(\wh{L}_{t}^{*})_{t>0}$ is independent of $\eta$ and obtained as a copy of $(L_{t}^{*})_{t\ge 0}$ by replacing the $\eta_{1},\eta_{2},\ldots$ by i.i.d. copies in the definition of the underlying multiplicative random walk $(\Pi_{k}^{*})_{k\ge 0}$ while keeping the delay $\eta_{0}^{*}$ fixed.

\vspace{.2cm}
Returning to the proof of Theorem \ref{main_thm2}, we consider again all three parts (a)-(c) simultaneously and use analogous notation as in Section \ref{sec_{p}roof_{t}hm1}. This means that $c(t)$ equals $\sigma_{0}t^{1/2}$ in part (a), is given by \eqref{rv1_{n}ormalization_mult_case} in part (b) and given by \eqref{rv2_{n}ormalization_mult_case} in part (c), $\alpha=2$ in parts (a) and (b). Also, $\mathcal{W}$ stands for the limiting distribution, thus for $\cN(0,1)$ in (a) and (b), and for $\cS_{\alpha}$ in (c).

Using the triangle inequality, we obtain
\begin{align*}
d_{1}\left(\frac{T_{n}-\mu_{0}^{-1} \log n}{\mu_0^{-(\alpha+1)/\alpha} c(\log n)}, \mathcal{W}\right)\ &\le\ d_{1}\left(\frac{T_{n}-\mu_{0}^{-1} \log n}{\mu_0^{-(\alpha+1)/\alpha} c(\log n)}, \frac{L^{*}_{n}-\mu_{0}^{-1} \log n}{\mu_0^{-(\alpha+1)/\alpha}c(\log n)}\right)\\
&+\ d_{1}\left(\frac{L^{*}_{n}-\mu_{0}^{-1} \log n}{\mu_0^{-(\alpha+1)/\alpha} c(\log n)}, \frac{L_{n}-\mu_{0}^{-1} \log n}{\mu_0^{-(\alpha+1)/\alpha} c(\log n)}\right)\\
&\quad +d_{1}\left(\frac{L_{n}-\mu_{0}^{-1} \log n}{\mu_0^{-(\alpha+1)/\alpha} c(\log n)}, \mathcal{W}\right).
\end{align*}
The third summand converges to zero by Proposition \ref{renewal_{p}rocess_d_{p}_conv} so that it is enough to prove (use (P4) of Proposition \ref{was_dis_{p}rop})
\begin{equation}\label{proof_mult_case_est1}
d_{1}(T_{n},L^{*}_{n})\ =\ o(c(\log n))\quad\text{as }n\to\infty
\end{equation}
and
\begin{equation}\label{proof_mult_case_est2}
d_{1}(L_{n},L^{*}_{n})\ =\ o(c(\log n))\quad\text{as }n\to\infty.
\end{equation}

We first consider \eqref{proof_mult_case_est2} and recall that the renewal counting process $(\Lambda_{t})_{t\in\R}$ is subadditive in distribution, viz.
\begin{equation}\label{subadditive}
\Lambda_{u+v}-\Lambda_{u}\ \overset{d}{\leqslant}\ \Lambda_{v}\quad\text{for all }u,v\in\R,
\end{equation}
where $\overset{d}{\leqslant}$ denotes stochastic ordering: $X\overset{d}{\leqslant} Y$ iff $\Prob(X>x)\le\Prob(Y>x)$ for all $x\in\R$.

In order to prove \eqref{proof_mult_case_est2}, notice that
\begin{equation}\label{stationary_from_standard}
\Lambda_{t}^{*}\ =\ \Lambda_{t-|\log\eta_{0}^{*}|}\quad\text{for all }t\in\R
\end{equation}
with $\eta_{0}^{*}$ being independent of $(\Lambda_{t})_{t\ge 0}$. Using this, we infer
\begin{align*}
d_{1}(L_{n},L^{*}_{n})\ &=\ d_{1}(\Lambda_{\log n},\Lambda_{\,\log n}^{*})\ \le\ \Erw|\Lambda_{\log n}-\Lambda_{\,\log n}^{*}|\\
&=\ \Erw \Lambda_{\,\log n}-\Erw\Lambda_{\,\log n}^{*}\ =\ o(c(\log n)),
\end{align*}
where $\Lambda_{t}^{*}\le\Lambda_{t}$ for all $t\in\R$ by \eqref{stationary_from_standard} has been utilized for the penultimate equality, and \eqref{st_ren_func}, \eqref{gen_ren_func} plus subsequent remark for the final estimate.  

\vspace{.1cm}
Left with the proof of \eqref{proof_mult_case_est1}, use the recursions \eqref{mult_renewal_st_rec} and \eqref{main_rec} to find that
\begin{align*}
e_{n}\ &:=\ d_{1}(T_{n},L^{*}_{n})\\
&=\ d_{1}(1+\wh{T}_{n-I_{n}},\1_{\{\eta_{0}^{*}>1/n\}}+\wh{L}^{*}_{n\eta})\\
&=\ d_{1}(\wh{T}_{n-I_{n}},\wh{L}^{*}_{n\eta}-\1_{\{\eta_{0}^{*}\le 1/n\}})\\
&\le\ d_{1}(\wh{L}^{*}_{n-I_{n}},\wh{L}^{*}_{n\eta}-\1_{\{\eta_{0}^{*}\le 1/n\}})\ +\ d_{1}(\wh{T}_{n-I_{n}},\wh{L}^{*}_{n-I_{n}})\\
&=\ e_{n}'\ +\ \sum_{k=0}^{n-1}\Prob(I_{n}=n-k)\,e_{k},
\end{align*}
where $e_{n}':=d_{1}(\wh{L}^{*}_{n-I_{n}},\wh{L}^{*}_{n\eta}-\1_{\{\eta_{0}^{*}\le 1/n\}})$. Let us assume for a moment that 
\begin{equation}\label{proof_mult_case_est3}
e_{n}'\ =\ o\left(\frac{c(\log n)}{\log n}\right)\quad\text{as }n\to\infty
\end{equation}
is already known and further note that $\AsM$ yields
\begin{equation}\label{mom_index_conv_mult}
\Erw (n-I_{n})\ \simeq\ n\,\Erw |\log \eta|\ =\ \mu_{0} n\quad\text{as }n\to\infty.
\end{equation}
The latter implies that the sequence $\psi_{n}\equiv 1$ satisfies condition (C1) of Lemma \ref{boundedness} when putting $p_{n,k}:=\Prob(I_{n}=n-k)$. An appeal to Lemma \ref{L61gen} together with \eqref{proof_mult_case_est3} provides us with
$$ e_{n}\ =\ O\left(\sum_{k=1}^{n}\sup_{j\ge k}\frac{e_j'}{j}\right)\ =\ o\left(\sum_{k=3}^{n}\sup_{j\ge k}\frac{c(\log j)}{j\log j}\right)\ =\ o\left(\sum_{k=3}^{n}\frac{c(\log k)}{k\log k}\right),
$$
where we have used \cite[Theorem 1.5.3]{BingGolTeug:89} and the fact that $c(\log t)/(t\log t)$ is regularly varying with index $-1$ (see Remark \ref{c_t_existence_and_properties} in the Appendix). Note that the sum $\sum_{k=3}^{n}\frac{c(\log n)}{n\log n}$ diverges as $n$ tends to $\infty$, because $c(t)$ varies regularly with index $1/\alpha>0$. Consequently, as $n\to\infty$,
$$ \sum_{k=3}^{n}\frac{c(\log k)}{k\log k}\ \simeq\ \int_{e}^{n}\frac{c(\log y)}{y\log y}\ dy\ =\ \int_{1}^{\log n}\frac{c(u)}{u}\ du\ \simeq\ \text{const}\cdot c(\log n) $$
by \cite[Theorem 1.6.1]{BingGolTeug:89} and \eqref{proof_mult_case_est1} is proved.

\vspace{.1cm}
It remains to show that \eqref{proof_mult_case_est3} holds. Let $(I_{n}',\eta')$ be a $(\cL(I_{n}),\cL(\eta))$-coupling such that 
$$ d_{1}\left(\log \left(1-\frac{I_{n}}{n}\right),\log\eta\right)\ =\ \left\|\log \left(1-\frac{I_{n}'}{n}\right)-\log\eta'\right\|_{1} $$ 
and $(\log\wh\Pi_{n}^{*})_{n\ge 0}$ be a copy of $(\log\Pi_{n}^{*})_{n\ge 0}$ with delay variable $\wh\eta_{0}^{*}$ and independent of $(I_{n}',\eta')$. Further defining 
$$ \wh\Lambda^{*}_{t}\ :=\ \sum_{k\ge 0}\1_{\{-\log\wh\Pi_{k}^{*}\le t\}}\quad\text{for } t\in\R $$ 
and $\wh{L}^{*}_{t}:=\wh\Lambda^{*}_{\log t}$ for $t>0$, we have
\begin{align*}
e_{n}'\ &=\ d_{1}(\wh{L}^{*}_{n-I_{n}},\wh{L}^{*}_{n\eta}-\1_{\{\eta_{0}^{*}\le 1/n\}})\\
&\le\ \left\|\wh{L}^{*}_{n-I_{n}'}-\wh{L}^{*}_{n\eta'}-\1_{\{\wh\eta_{0}^{*}\le 1/n\}}\right\|_{1}\\
&\le\ \|\wh{L}^{*}_{n-I_{n}'}-\wh{L}^{*}_{n\eta'}\|_{1}\ +\ \Prob(\wh\eta_{0}^{*}\le 1/n),
\end{align*}
where $\wh\eta_{0}^{*}=\wh\Pi_{0}^{*}\eqdist \eta_{0}^{*}$. Recalling \eqref{overshoot_density}, we see that
\begin{align}
\begin{split}\label{r_vs_c}
\Prob(\eta_{0}^{*}\le 1/n)\ &=\ \Prob(-\log \eta_{0}^{*}\ge \log n)\\
&=\ \frac{1}{\mu_{0}}\int_{\log n}^{\infty}\Prob(|\log \eta|>s)\ ds\ =\ 1-r(\log n).
\end{split}
\end{align}
Since in all parts\footnote{In (a) the numerator is bounded, since $1-r(t)$ is integrable, while in (b),\,(c) the numerator varies regularly with index $2-\alpha$ and the denominator varies regularly with index $1/\alpha>2-\alpha$ (with $\alpha=2$ in parts (a) and (b)).} (a)-(c)
$$ \lim_{t\to\infty}\frac{t(1-r(t))}{c(t)}=0, $$
we find that
$$ \Prob(\eta_{0}^{*}\le 1/n)\ =\ o(c(\log n)/\log n)\quad\text{as }n\to\infty. $$

In order to bound $\|\wh{L}^{*}_{n-I_{n}'}-\wh{L}^{*}_{n\eta'}\|_{1}$, we estimate 
\begin{align*}
&\left\|\wh{L}^{*}_{n-I_{n}'}-\wh{L}^{*}_{n\eta'}\right\|_{1}\ =\ \left\|\wh\Lambda^{*}_{\log (n-I_{n}')}-\wh\Lambda^{*}_{\log(n\eta')}\right\|_{1}\\
&=\ \left\|\sum_{k=0}^{\infty}\1_{\{\log (n-I_{n}')\wedge \log(n\eta')< -\log \wh\Pi^{*}_{k}\le\log (n-I_{n}')\vee \log(n\eta')\}}\right\|_{1}\\
&=\ \left\|\sum_{k=0}^{\infty}\1_{\{\log (n-I_{n}')\wedge \log^{+}(n\eta')< -\log \wh\Pi^{*}_{k}\le\log (n-I_{n}')\vee \log^{+}(n\eta')\}}\right\|_{1}\\
&\le\ \sum_{k=0}^{\infty}\Prob\big(\log (n-I_{n}')\wedge \log^{+}(n\eta')< -\log \wh\Pi^{*}_{k}\le\log (n-I_{n}')\vee \log^{+}(n\eta')\big)\\
&=\ \Erw\wh\Lambda_{\log (n-I_{n}')\vee\log^{+}(n\eta')}^{*}\ -\ \Erw\wh\Lambda_{\log (n-I_{n}')\wedge \log^{+}(n\eta')}^{*}.
\intertext{Now use $\Erw \wh\Lambda^{*}_{t}=\mu_{0}^{-1}t^{+}$ to see that the last line can be further estimated by}
&= \frac{1}{\mu_{0}}\Big((\Erw (\log (n-I_{n}')\vee \log^{+}(n\eta')) - \Erw (\log (n-I_{n}')\wedge \log^{+}(n\eta'))\Big)\\
&=\ \frac{1}{\mu_{0}}\Erw\left|\log (n-I_{n}')-\log^{+}(n\eta')\right|\\
&\le\ \frac{1}{\mu_{0}}\Erw\left|\log \Big(1-\frac{I_{n}'}{n}\Big)-\log\eta'\right|+\frac{1}{\mu_{0}}\Erw\left|\log(n\eta')-\log^{+}(n\eta')\right|\\
&=\ \frac{1}{\mu_{0}}d_{1}\left(\log \Big(1-\frac{I_{n}}{n}\Big),\log\eta\right)+\frac{1}{\mu_{0}}\Erw\left((-\log(\eta n))\1_{\{n\eta\le 1\}}\right).
\end{align*} 
The first summand is $o(c(\log n)/(\log n))$, $n\to\infty$, by Condition \eqref{multiplicative_finite_variance_condition} in part (a) and by Condition
\eqref{mult_sv_variance_condition} in (b) and (c). As for the second summand, an integration by parts yields
\begin{align*}
\frac{1}{\mu_{0}}\Erw((-\log(\eta n))\1_{\{n\eta\le 1\}})\ &=\ \frac{1}{\mu_{0}}\int_{\log n}^{\infty}(s-\log n)\ \Prob(-\log\eta\in ds)\\
&=\ \frac{1}{\mu_{0}}\int_{\log n}^{\infty}\Prob(-\log\eta>s)\ ds\\
&=\ 1-r(\log n),
\end{align*}
which is of the order $o(c(\log n)/(\log n))$ as $n\to\infty$ by $\eqref{r_vs_c}$ and subsequent remarks. This completes the proof.\qed

\section{Appendix}

\subsection{Convergence of renewal quantities in minimal $L^{p}$-distance}

\begin{proposition}\label{renewal_{p}rocess_d_{p}_conv}
Let $\xi,\xi_{1},\xi_{2},...$ be iid positive and nonarithmetic random variables with finite mean $\mu$ and associated zero-delayed random walk $(S_{n})_{n\ge 0}$. For $t\ge 0$, let
$$ N_{t}\ :=\ \sum_{n\ge 0}\1_{\{S_n\leq t\}} $$
denote the number of renewals in $[0,t]$.
\begin{description}[(R3)]\itemsep2pt
\item[(R1)] If $\sigma^{2}:={\rm Var}\,\xi<\infty$, then
\begin{equation*}
d_{2}\left(\frac{N_{t}-\mu^{-1}t}{\sigma\mu^{-3/2}t^{1/2}},\,\cN(0,1)\right)\ \stackrel{t\to\infty}{\longrightarrow}\ 0.
\end{equation*}
\item[(R2)] If $\sigma^{2}=\infty$,
\begin{equation*}
\ell(t)\ :=\ \Erw [\xi^{2} 1_{\{\xi\le t\}}]
\end{equation*}
is slowly varying at infinity and $c(t)$ a positive continuous function such that
\begin{equation*}
\lim_{t\to\infty}\frac{t\ell(c(t))}{c^{2}(t)}=1,
\end{equation*}
then
\begin{equation*}
d_{1}\left(\frac{N_{t}-\mu^{-1}t}{\mu^{-3/2}c(t)},\,\cN(0,1)\right)\ \stackrel{t\to\infty}{\longrightarrow\ 0}.
\end{equation*}
\item[(R3)] If, for some $\alpha\in(1,2)$, 
$$ \ell(t)\ :=\ t^{\alpha}\,\Prob(\xi>t) $$
is slowly varying at infinity and $c(t)$ a positive function satisfying
\begin{equation*}
\lim_{t\to\infty}\frac{t\ell(c(t))}{c^{\alpha}(t)}=1,
\end{equation*}
then
\begin{equation*}
d_{1}\left(\frac{N_{t}-\mu^{-1}t}{\mu^{-(1+\alpha)/\alpha}c(t)},\cS_{\alpha}\right)\ \stackrel{t\to\infty}{\longrightarrow}\ 0,
\end{equation*}
where $\cS_{\alpha}$ denotes the $\alpha$-stable law with characteristic function \eqref{cf_stable_law}.
\end{description}
All assertions remain valid with $t=nd$ if $\xi$ is $d$-artihmetic for some $d>0$.
\end{proposition}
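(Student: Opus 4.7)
The plan is to invoke property (P5) of Proposition \ref{was_dis_{p}rop}, which states that $d_p$-convergence is equivalent to weak convergence together with convergence of the $p$-th absolute moment. Hence in each of (R1)--(R3) it suffices to establish the corresponding weak limit theorem for the normalized $N_t$ (with $p=2$ in (R1) and $p=1$ in (R2)--(R3)) together with convergence of the $p$-th absolute moment of the normalized $N_t$ to that of the claimed limit law.

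The weak convergence assertions are classical. Using the inversion identity $\{N_t\ge n\}=\{S_{n-1}\le t\}$, one translates each of the three regimes into a well-known limit theorem for the random walk $(S_n)_{n\ge 0}$: the ordinary CLT gives (R1), the normal CLT with normalization $c(t)$ dictated by the slowly varying truncated second moment gives (R2), and the stable limit theorem with index $\alpha$ gives (R3). Such statements can be found, for instance, in Gut's monograph on stopped random walks. The extension to the $d$-arithmetic case of the final assertion is carried out verbatim, with Blackwell's theorem for lattice walks replacing its nonarithmetic counterpart.

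For (R1), the moment convergence follows from the classical variance asymptotic $\Var\,N_t \sim \sigma^2 t/\mu^3$, which combined with $\Erw N_t = \mu^{-1}t + O(1)$ from elementary renewal theory yields $\Erw[((N_t-\mu^{-1}t)/(\sigma\mu^{-3/2}t^{1/2}))^2]\to 1 = \Erw[\cN(0,1)^2]$. The variance asymptotic itself can be obtained from the identity $\Erw N_t^2 = \sum_{n\ge 1}(2n-1)\Prob(S_{n-1}\le t)$ via the key renewal theorem applied to the function $n\mapsto 2n-1$ smoothed by the renewal measure.

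For (R2) and (R3) one must establish uniform integrability of $(N_t-\mu^{-1}t)/c(t)$. The standard device is to truncate $\xi$ at level $c(t)$, apply the (normal or stable) limit theorem to the truncated random walk, and control the contribution of the large jumps via the slow variation of $\ell$. The main obstacle is the moment convergence in (R3): since the limit $\cS_\alpha$ is heavy-tailed and $c(t)$ is regularly varying with nonintegral index $1/\alpha$, one needs a sharp tail bound for $(N_t-\mu^{-1}t)/c(t)$ matching the tail of $\cS_\alpha$, uniformly in large $t$, in order to rule out mass escape when taking first absolute moments. This is typically accomplished by coupling $N_t$ with the first passage time $\tau(t):=\inf\{n:S_n>t\}$ and invoking maximal and moment inequalities for random walks in the stable domain of attraction.
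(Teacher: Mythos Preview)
Your strategy is exactly the paper's: reduce $d_p$-convergence to weak convergence plus convergence of the $p$-th absolute moment via property (P5), and then quote the standard weak limit theorems for renewal counting processes. For (R1) your moment argument coincides with the paper's (which cites \cite[Chapter III, Theorem 8.4]{Gut:09} for the second-moment asymptotics).

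The only substantive difference is in (R2) and (R3). The paper does not attempt to prove uniform integrability or moment convergence from scratch; it simply invokes Lemma~A.1 of \cite{IksMarMei:14}, which states directly that $\Erw\big|(N_t-\mu^{-1}t)/c(t)\big|$ converges to the first absolute moment of the respective limit law. Your sketch via truncation, coupling with the first passage time, and maximal inequalities points in the right direction but stops short of an actual proof---you yourself flag the tail control in (R3) as ``the main obstacle'' without resolving it. If you want a self-contained argument, that lemma in \cite{IksMarMei:14} is where the work is done; otherwise, citing it (as the paper does) closes the gap immediately.
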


\begin{proof}
Replacing the convergence in $d_{p}$ for $p=1$ or $2$ with weak convergence, the proposition is well-known (see, for instance, \cite[Chapter III, Section 5]{Gut:09}). Since $\sigma^{2}<\infty$ further implies
$$ \lim_{t\to\infty}\,\Erw\left(\frac{N_{t}-\mu^{-1}t}{\sqrt{\sigma^{2}\mu^{-3}t}}\right)^{2}\ =\ \Erw Z^{2}\ =\ 1,\quad Z\eqdist\cN(0,1) $$
(see e.g. \cite[Theorem 8.4 in Chapter III]{Gut:09}), the assertion in (R1) follows by an appeal to (P5) of Proposition \ref{was_dis_{p}rop}. But (R2) and (R3) follow in the same manner when invoking Lemma A.1 in \cite{IksMarMei:14} which states the convergence of the first absolute moment of $(N_{t}-\mu^{-1}t)/c(t)$ as $t\to\infty$ to the first absolute moment of the respective limiting distribution.\qed
\end{proof}

\begin{remark}\label{c_t_existence_and_properties}
The function $c(t)$ appearing in the cases (R2) and (R3) always exists provided that $\Prob(\xi>t)$ varies regularly at infinity\footnote{In the case (R2) slow variation of $\ell(t)$ is equivalent to regular variation of $\Prob(\xi>t)$ with index $-2$.}, i.e. $\Prob(\xi>t)=t^{-\alpha}\ell(t)$ for some slowly varying function $\ell$ and $\alpha\in(1,2]$. This function can be equivalently defined, see e.g. \cite[Theorem 7]{Feller:49}, by the asymptotic relation $\Prob(\xi>c(t))\simeq 1/t$, as $t\to\infty$, in particular one can choose
$$ c(t):=\Big(1/\Prob(\xi>t))\Big)^{\leftarrow}. $$
Moreover, by Proposition 1.5.15 in \cite{BingGolTeug:89}
$$ c(t)\simeq t^{1/\alpha}(\ell^{\#}(t))^{1/\alpha}\quad\text{as }t\to\infty, $$
where $\ell^{\#}$ denotes the de Bruijn conjugate of the slowly varying function $L(t): = 1/\ell (t^{1/\alpha})$. Since $\ell^{\#}$ is slowly varying as well, $c$ is regularly varying of index $1/\alpha$, a fact which has repeatedly been used in our proofs.
\end{remark}

\subsection{A linear recursion}

Fixing $a\in\N_{0}$ and a sequence $(r_{n})_{n\ge 1}$ of positive reals, let $(p_{n,k})_{0\le k<n}$ be an arbitrary probability distribution on $\{0,\ldots,n-1\}$ for each $n>a$ and then $(s_{n})_{n\ge 0}$ the unique solution to the linear recursion
\begin{equation}\label{recursion}
s_{n}\ =\ r_{n}+\sum_{k=0}^{n-1} p_{n,k}s_{k},\quad n>a,
\end{equation}
with given initial values $s_{0},s_{1},\ldots,s_{a}$. The following result forms a slight extension of Lemma 6.1 from \cite{GneIksMarMoh:14}.

\begin{lemma}\label{L61} \label{boundedness}
Suppose there exists a sequence $(\psi_{n})_{n\ge 1}$ such that
\begin{description}[(C2)]\itemsep2pt
\item[\rm(C1)] $\liminf_{n\to\infty}n^{-1}\psi_{n}\sum_{k=0}^{n-1}(n-1-k)p_{n,k}>0$,
\item[\rm(C2)] the sequence $(r_{k}\psi_{k}/k)_{k\ge 1}$ is nonincreasing.
\end{description}
Then $(s_{n})_{n\ge 0}$, defined by \eqref{recursion}, satisfies
\begin{equation}\label{bounded}
s_{n}\ =\ O\left(\sum_{k=1}^{n}\frac{r_{k}\psi_{k}}{k}\right)\quad\text{as }n\to\infty.
\end{equation}
\end{lemma}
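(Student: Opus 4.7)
The plan is to establish a uniform bound of the form $s_n \le C\,S_n + M$ for all $n$, where $S_n := \sum_{k=1}^{n} r_k\psi_k/k$ and $C,M > 0$ are suitable constants, by a strong induction that closes exactly because the deficit generated by (C2) and (C1) absorbs the forcing term $r_n$. The core mechanism is to combine (C2), which controls the shortfall $S_n - S_k$ from below in terms of $(n-k)r_n\psi_n/n$, with (C1), which guarantees that the $p_{n,k}$-average of $(n-k)$ is at least a constant multiple of $n/\psi_n$ for all large $n$. Together these produce an extra negative contribution of size $\delta r_n$ in the averaged recursion, where $\delta>0$ is the constant from (C1).

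Concretely, the first step is to observe that (C2) implies $r_j\psi_j/j \ge r_n\psi_n/n$ for $1 \le j \le n$, hence
\[
S_k\ =\ S_n\ -\ \sum_{j=k+1}^{n}\frac{r_j\psi_j}{j}\ \le\ S_n\ -\ (n-k)\,\frac{r_n\psi_n}{n},\qquad 1\le k\le n.
\]
Averaging against $(p_{n,k})_{0\le k<n}$ and using $(n-k)\ge(n-1-k)$ together with the $\liminf$ in (C1), one obtains $\delta>0$ and $n_0\in\N$ such that
\[
\sum_{k=0}^{n-1} p_{n,k}\,S_k\ \le\ S_n\ -\ \delta\,r_n\qquad\text{for all }n\ge n_0.
\]

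The next step is to set $C := 1/\delta$ and choose $M \ge 0$ large enough that $s_k \le C\,S_k + M$ holds for every $k\in\{0,1,\ldots,n_0-1\}$; this is possible since only finitely many initial indices are involved and $S_k \ge 0$. Assuming inductively that the bound holds for all indices below $n$ (with $n \ge n_0$), the recursion \eqref{recursion} together with the preceding inequality gives
\[
s_n\ \le\ r_n\ +\ C\sum_{k=0}^{n-1} p_{n,k}\,S_k\ +\ M\ \le\ r_n\ +\ C(S_n-\delta\,r_n)\ +\ M\ =\ C\,S_n\ +\ M,
\]
closing the induction and yielding $s_n = O(S_n)$ whenever $S_n\to\infty$ (in the remaining case $s_n$ stays bounded, so the estimate is trivial).

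The only delicate point is the base-case bookkeeping, which requires $M$ to absorb both the prescribed values $s_0,\ldots,s_a$ and the transient behaviour in the finite range $a<n<n_0$ where the $\liminf$ in (C1) has not yet stabilised at $\delta$; but since there are only finitely many such indices this amounts to a routine enlargement of $M$. The substantive content of the argument is therefore the algebraic identity obtained by substituting the telescoping decomposition of $S_k$ into the averaged recursion, and the main interpretive hurdle is recognising that this is precisely what converts the qualitative drift condition (C1) together with the monotonicity condition (C2) into a self-reproducing pointwise bound.
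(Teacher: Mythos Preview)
Your proposal is correct and follows essentially the same route as the paper's proof: both set up the induction hypothesis $s_n \le C\,S_n + M$ with $S_n=\sum_{k=1}^n r_k\psi_k/k$ and $C=1/\delta$, use (C2) to bound the deficit $S_n-S_k$ from below by $(n-k)r_n\psi_n/n$, and then invoke (C1) so that the averaged deficit absorbs the forcing term $r_n$. The only cosmetic difference is that the paper reaches the same inequality via a Fubini interchange $\sum_{k}p_{n,k}\sum_{i\le k}=\sum_i(\cdot)\sum_{k\ge i}p_{n,k}$ rather than your telescoping $S_k=S_n-\sum_{j>k}$, but the algebra and the constants coincide.
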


\begin{proof}
By assumption (C1), there exists $n_{0}>a$ such that
$$ c^{-1}\ :=\ \inf_{n>n_{0}}\frac{\psi_{n}}{n}\sum_{k=0}^{n-1}p_{n,k}(n-1-k)\ >\ 0, $$
thus $c<\infty$.  We show by induction that 
$$ s_{n}\ \le\ \sum_{i=0}^{n_{0}}s_{i}\ +\ c\sum_{i=1}^{n}\frac{r_{i}\psi_{i}}{i}\quad\text{for all }n\in\N_{0}. $$
As this estimate obviously holds for $n\le n_{0}$, fix $n>n_{0}$ and suppose that it be true for all $k<n$. Defining $M^{*}:=\sum_{i=0}^{n_{0}}s_{i}$, we find
\begin{align*}
s_{n}\ &=\ r_{n}\ +\ \sum_{k=0}^{n-1}p_{n,k}s_{k}\\
&\le\ r_{n}\ +\ \sum_{k=0}^{n-1} p_{n,k}\left(M^{*}\ +\ c\sum_{i=1}^{k}\frac{r_{i}\psi_{i}}{i}\right)\\
&=\ M^{*}\ +\ r_{n}\ +\ c\sum_{k=1}^{n-1}p_{n,k}\sum_{i=1}^{k}\frac{r_{i}\psi_{i}}{i}\\
&=\ M^{*}\ +\ r_{n}\ +\ c\sum_{i=1}^{n-1} \frac{r_{i}\psi_{i}}{i}\sum_{k=i}^{n-1}p_{n,k}\\
&=\ M^{*}\ +\ r_{n}\ +\ c\sum_{i=1}^{n-1} \frac{r_{i}\psi_{i}}{i}\left(1-\sum_{k=0}^{i-1}p_{n,k}\right)\\
&\le\ M^{*}\ +\ r_{n}\ +\ c\sum_{i=1}^{n-1}\frac{r_{i}\psi_{i}}{i}\ -\ c\,\frac{r_{n}\psi_{n}}{n}\sum_{i=1}^{n-1} \sum_{k=0}^{i-1}p_{n,k}\\
&\le\ M^{*}\ +\ r_{n}\ +\ c\sum_{i=1}^{n-1} \frac{r_{i}\psi_{i}}{i}\ -\ c\,\frac{r_{n}\psi_{n}}{n}\sum_{k=0}^{n-1}p_{n,k}(n-1- k)\\
&\le\ M^{*}\ +\ c\sum_{i=1}^{n-1}\frac{r_{i}\psi_{i}}{i},
\end{align*}
where the last inequality follows from the definition of $c$.\qed
\end{proof}

We actually need the following generalization of the previous result.

\begin{lemma}\label{L61gen}
Suppose that $(s_{n})_{n\ge 0}$ satisfies
\begin{equation}\label{recursion_ineq}
s_{n}\ \le\ r_{n}\ +\ \sum_{k=0}^{n-1}p_{n,k}s_{k}\quad\text{for }n>a
\end{equation}
for a nonnegative sequence $(r_{n})_{n\ge 1}$ and initial values $s_{0},...,s_{a}$. Suppose further the existence of a sequence $(\psi_{n})_{n\ge 1}$ such that (C1) holds and
\begin{description}[(C2)]
\item[(C3)] the sequence $(r_{k}\psi_{k}/k)_{k\ge 1}$ is bounded. 
\end{description}
Then $(s_{n})_{n\ge 0}$ satisfies
\begin{equation}\label{bounded gen}
s_{n}\ =\ O\left(\sum_{k=1}^{n}\frac{r_{k}^{*}\psi_{k}}{k}\right)\quad\text{as }n\to\infty,
\end{equation}
where $\displaystyle r_{k}^{*}=\frac{k}{\psi_{k}}\,\sup_{j\ge k}\frac{r_{j}\psi_{j}}{j}$ for $k\ge 1$.
\end{lemma}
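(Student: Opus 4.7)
My plan is to reduce Lemma \ref{L61gen} to Lemma \ref{L61} by replacing the sequence $(r_n)$ with a suitably enlarged, ``monotonized'' sequence, and then dominating $s_n$ by the exact solution of the modified recursion.

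Concretely, I would first set $\widetilde r_n := r_n^{*}$, so that by definition $\widetilde r_n \psi_n/n = \sup_{j\ge n} r_j\psi_j/j$. Two facts are immediate: (i) $\widetilde r_n \ge r_n$ for every $n$ (take $j=n$ in the sup), and (ii) the sequence $(\widetilde r_n\psi_n/n)_{n\ge 1}$ is nonincreasing, so it satisfies condition (C2) of Lemma \ref{L61}. Moreover, hypothesis (C3) guarantees that the sup is finite, so $\widetilde r_n$ is well-defined and nonnegative. Next, define $(\widetilde s_n)_{n\ge 0}$ as the \emph{exact} solution of the linear recursion
\begin{equation*}
\widetilde s_n \;=\; \widetilde r_n + \sum_{k=0}^{n-1} p_{n,k}\,\widetilde s_k, \qquad n>a,
\end{equation*}
with initial values $\widetilde s_i := s_i$ for $0\le i\le a$.

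A straightforward induction on $n$ then yields $s_n \le \widetilde s_n$ for all $n\ge 0$: the base case $n\le a$ holds by construction, and for $n>a$, assuming $s_k\le \widetilde s_k$ for all $k<n$, inequality \eqref{recursion_ineq} and $r_n\le \widetilde r_n$ give
\begin{equation*}
s_n \;\le\; r_n + \sum_{k=0}^{n-1} p_{n,k}\,s_k \;\le\; \widetilde r_n + \sum_{k=0}^{n-1} p_{n,k}\,\widetilde s_k \;=\; \widetilde s_n.
\end{equation*}

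Finally, since $(\widetilde r_n)$ together with $(\psi_n)$ satisfies both (C1) (inherited verbatim from the hypothesis) and (C2) (by construction), Lemma \ref{L61} applies to $(\widetilde s_n)$ and yields
\begin{equation*}
\widetilde s_n \;=\; O\!\left(\sum_{k=1}^{n}\frac{\widetilde r_k\,\psi_k}{k}\right) \;=\; O\!\left(\sum_{k=1}^{n}\frac{r_k^{*}\,\psi_k}{k}\right) \quad\text{as }n\to\infty,
\end{equation*}
which combined with $s_n\le \widetilde s_n$ gives the claim \eqref{bounded gen}. The only nontrivial point is verifying monotonicity of $(\widetilde r_k\psi_k/k)$, but this is by design of $r_n^{*}$; there is no genuine obstacle beyond setting up the monotonized majorant correctly.
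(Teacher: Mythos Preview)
Your proof is correct and follows essentially the same route as the paper's: define the monotonized majorant $r_n^*$, build the exact solution $\widetilde s_n$ of the recursion with $r_n^*$ and the same initial data, dominate $s_n\le\widetilde s_n$ by induction, and then apply Lemma~\ref{L61}. If anything, you make the role of (C3) (finiteness of the sup defining $r_n^*$) more explicit than the paper does.
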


\begin{proof}
First note that $(\psi_{n})_{n\ge 1}$ satisfies (C2) of the previous lemma with $(r_{n}^{*})_{n\ge 1}$ instead of $(r_{n})_{n\ge 1}$. Let $(s_{n}^{*})_{n\ge 0}$ be defined by $s_{k}^{*}:=s_{k}$ for $k=0,...,a$ and $s_{n}^{*}=r_{n}^{*}+\sum_{k=0}^{n-1}p_{n,k}s_{k}^{*}$ for $n>a$. Since $r_{n}^{*}\ge r_{n}$ for each $n$ and
$$ s_{n}^{*}-s_{n}\ \ge\ r_{n}^{*}-r_{n}\ +\ \sum_{k=0}^{n-1}p_{n,k}(s_{k}^{*}-s_{k})\quad\text{for }n>a, $$
a simple induction shows that $s_{n}\le s_{n}^{*}$ for all $n\ge 0$. Now use Lemma \ref{L61} to infer
\begin{equation*}
s_{n}^{*}\ =\ O\left(\sum_{k=1}^{n}\frac{r_{k}^{*}\psi_{k}}{k}\right)\quad\text{as }n\to\infty,
\end{equation*}
which in combination with the previous statement proves \eqref{bounded gen}.\qed
\end{proof}

\vspace{1cm}
\footnotesize
\noindent   {\bf Acknowledgements.}
The research of Gerold Alsmeyer was supported by the Deutsche Forschungsgemeinschaft (SFB 878), the research of Alexander Marynych by the Alexander von Humboldt Foundation.

\bibliographystyle{abbrv}
\bibliography{StoPro}

\def\cprime{$'$}
\begin{thebibliography}{10}

\bibitem{Alsmeyer:13}
G.~Alsmeyer.
\newblock The smoothing transform: a review of contraction results.
\newblock In {\em Random matrices and iterated random functions}, volume~53 of
  {\em Springer Proc. Math. Stat.}, pages 189--228. Springer, Heidelberg, 2013.

\bibitem{NBerestycki:09}
N.~Berestycki.
\newblock {\em Recent progress in coalescent theory}, volume~16 of {\em Ensaios
  Matem\'aticos [Mathematical Surveys]}.
\newblock Sociedade Brasileira de Matem\'atica, Rio de Janeiro, 2009.

\bibitem{BertoinKortch:14}
J.~Bertoin and I.~Kortchemski.
\newblock Self-similar scaling limits of markov chains on the positive
  integers, 2014.
\newblock \texttt{www.arxiv.org}:1412.1068.

\bibitem{BingGolTeug:89}
N.~H. Bingham, C.~M. Goldie, and J.~L. Teugels.
\newblock {\em Regular variation}, volume~27 of {\em Encyclopedia of
  Mathematics and its Applications}.
\newblock Cambridge University Press, Cambridge, 1989.

\bibitem{Feller:49}
W.~Feller.
\newblock Fluctuation theory of recurrent events.
\newblock {\em Trans. Amer. Math. Soc.}, 67:98--119, 1949.

\bibitem{GivensShortt:84}
C.~R. Givens and R.~M. Shortt.
\newblock A class of {W}asserstein metrics for probability distributions.
\newblock {\em Michigan Math. J.}, 31(2):231--240, 1984.

\bibitem{GneIksMar:10}
A.~Gnedin, A.~Iksanov, and A.~Marynych.
\newblock Limit theorems for the number of occupied boxes in the {B}ernoulli
  sieve.
\newblock {\em Theory Stoch. Process.}, 16(2):44--57, 2010.

\bibitem{GneIksMar:14}
A.~Gnedin, A.~Iksanov, and A.~Marynych.
\newblock {$\Lambda$}-coalescents: a survey.
\newblock {\em J. Appl. Probab.}, 51A(Celebrating 50 Years of The Applied
  Probability Trust):23--40, 2014.

\bibitem{GneIksMarMoh:14}
A.~Gnedin, A.~Iksanov, A.~Marynych, and M.~M{\"o}hle.
\newblock On asymptotics of the beta coalescents.
\newblock {\em Adv. in Appl. Probab.}, 46(2):496--515, 2014.

\bibitem{GnedinPitman:05}
A.~Gnedin and J.~Pitman.
\newblock Regenerative composition structures.
\newblock {\em Ann. Probab.}, 33(2):445--479, 2005.

\bibitem{GneYakub:07}
A.~Gnedin and Y.~Yakubovich.
\newblock On the number of collisions in {$\Lambda$}-coalescents.
\newblock {\em Electron. J. Probab.}, 12:no. 56, 1547--1567, 2007.

\bibitem{Gnedin:04}
A.~V. Gnedin.
\newblock The {B}ernoulli sieve.
\newblock {\em Bernoulli}, 10(1):79--96, 2004.

\bibitem{GneIksNegRos:09}
A.~V. Gnedin, A.~M. Iksanov, P.~Negadajlov, and U.~R{\"o}sler.
\newblock The {B}ernoulli sieve revisited.
\newblock {\em Ann. Appl. Probab.}, 19(4):1634--1655, 2009.

\bibitem{Gut:09}
A.~Gut.
\newblock {\em Stopped random walks. Limit theorems and applications}.
\newblock Springer Series in Operations Research and Financial Engineering.
  Springer, New York, $2^{nd}$ edition, 2009.

\bibitem{HaasMiermont:11}
B.~Haas and G.~Miermont.
\newblock Self-similar scaling limits of non-increasing {M}arkov chains.
\newblock {\em Bernoulli}, 17(4):1217--1247, 2011.

\bibitem{IksMarMei:12}
A.~Iksanov, A.~Marynych, and M.~Meiners.
\newblock Moment convergence in renewal theory, 2012.
\newblock \texttt{www.arxiv.org}:1208.3964.

\bibitem{IksMarMei:14}
A.~Iksanov, A.~Marynych, and M.~Meiners.
\newblock Limit theorems for renewal shot noise processes with eventually
  decreasing response functions.
\newblock {\em Stochastic Process. Appl.}, 124(6):2132--2170, 2014.

\bibitem{IksMohl:07}
A.~Iksanov and M.~M{\"o}hle.
\newblock A probabilistic proof of a weak limit law for the number of cuts
  needed to isolate the root of a random recursive tree.
\newblock {\em Electron. Comm. Probab.}, 12:28--35, 2007.

\bibitem{IksMohl:08}
A.~Iksanov and M.~M{\"o}hle.
\newblock On the number of jumps of random walks with a barrier.
\newblock {\em Adv. in Appl. Probab.}, 40(1):206--228, 2008.

\bibitem{JohnsonSamworth:05}
O.~Johnson and R.~Samworth.
\newblock Central limit theorem and convergence to stable laws in {M}allows
  distance.
\newblock {\em Bernoulli}, 11(5):829--845, 2005.

\bibitem{Marynych:10}
A.~Marynych.
\newblock {Asymptotic behaviour of absorption time of decreasing {M}arkov
  chains (in {U}krainian)}.
\newblock {\em Bulletin of Kiev University, Ser. Phys.-Math. Sciences},
  1:118--121, 2010.

\bibitem{MarVerovkin:14}
A.~Marynych and G.~Verovkin.
\newblock Weak convergence of the number of zero increments in the random walk
  with barrier.
\newblock {\em Electron. Commun. Probab.}, 19:no. 74, 11, 2014.

\bibitem{Pitman:99}
J.~Pitman.
\newblock Coalescents with multiple collisions.
\newblock {\em Ann. Probab.}, 27(4):1870--1902, 1999.

\bibitem{Rachev:91}
S.~T. Rachev.
\newblock {\em Probability metrics and the stability of stochastic models}.
\newblock Wiley Series in Probability and Mathematical Statistics: Applied
  Probability and Statistics. John Wiley \& Sons, Ltd., Chichester, 1991.

\bibitem{Ross:82}
S.~M. Ross.
\newblock A simple heuristic approach to simplex efficiency.
\newblock {\em European J. Oper. Res.}, 9(4):344--346, 1982.

\bibitem{Sagitov:99}
S.~Sagitov.
\newblock The general coalescent with asynchronous mergers of ancestral lines.
\newblock {\em J. Appl. Probab.}, 36(4):1116--1125, 1999.

\bibitem{VanCutsemYcart:94}
B.~Van~Cutsem and B.~Ycart.
\newblock Renewal-type behavior of absorption times in {M}arkov chains.
\newblock {\em Adv. in Appl. Probab.}, 26(4):988--1005, 1994.

\bibitem{Zolotarev:97}
V.~M. Zolotarev.
\newblock {\em Modern theory of summation of random variables}.
\newblock Modern Probability and Statistics. VSP, Utrecht, 1997.

\end{thebibliography}

\end{document}